\documentclass{article}[14pt]
\usepackage{amsmath, epsfig,amssymb, multicol}
\usepackage{amsthm}
\usepackage[active]{srcltx}
\usepackage{epstopdf}
\usepackage{graphicx}
\usepackage{color}
\usepackage{setspace}
\usepackage[colorlinks=true,linkcolor=blue,citecolor=blue,urlcolor=blue]{hyperref}
\usepackage[nameinlink,noabbrev]{cleveref}

\theoremstyle{plain}
\newtheorem{theorem}{Theorem}
\newtheorem{conjecture}{Conjecture}
\newtheorem{lemma}{Lemma}

\newtheorem{claim}{Claim}

\newtheorem{fact}{Fact}
\newcommand{\ex}{\operatorname{ex}}

\newcommand{\cC}{\mathcal C}
\newcommand{\cH}{\mathcal H}
\newcommand{\cM}{\mathcal M}

\newcommand{\dG}{d_G}
\newcommand{\Nbr}{N_G}
\newcommand{\codeg}{\operatorname{codeg}}

\title{Counting triangles in graphs without  suspensions of  paths}
\author{
Chunyang Dou\thanks{School of Mathematical Sciences, Anhui University, Hefei 230601, P. R. China. Email:
\texttt{chunyang@stu.ahu.edu.cn}. Supported in part by the National Natural Science Foundation of China (No. 12471319) and Anhui
Provincial Department of Education Research Project (No.~2025AHGXZK50154).}
\and Xing Peng\thanks{Center for Pure Mathematics, School of Mathematical Sciences, Anhui University, Hefei 230601, P. R. China.
Email: \texttt{x2peng@ahu.edu.cn}. Supported in part by the National Natural Science Foundation of China (No. 12471319) and
Excellent University Research and Innovation Team in Anhui Province (No. 2025AHGXZK10041).}
}
\date{}

\begin{document}

\maketitle

\begin{abstract}
For a graph $H$,  the suspension $\widehat H$ of $H$  is a graph  obtained from $H$
by joining a new vertex  to every vertex of $H$. The generalized Tur\'an number
$\ex(n,K_3,\widehat H)$ is the maximum number of triangles in an
$n$-vertex $\widehat H$-free graph. For fixed $k \ge 4$, Mubayi and Mukherjee conjectured that
\[
  \ex(n,K_3,\widehat P_k)
  =\left\lfloor\frac{k-2}{2}\right\rfloor\frac{n^2}{8}+o(n^2),
\]
as $n \to \infty$,
where $P_k$ is the path on $k$ vertices. They verified the conjecture for $k=4,5,6$.
In this paper, we determine the asymptotic value of $\ex(n,K_3,\widehat P_k)$ for every $k\ge 4$ via a unified approach and thus prove the conjecture of Mubayi and Mukherjee.

\medskip
\noindent\textbf{Keywords:} Tur\'an number; triangle; suspension.
\end{abstract}

\section{Introduction}\label{sec:introduction}
The study of the Tur\'an number of graphs is one of the central themes of extremal graph theory.
Let $F$ be a graph. If $G$ does not contain $F$ as a subgraph, then we say $G$ is $F$-free. The {\it Tur\'an number}
   $\ex(n, F)$ is the maximum number of edges in an $n$-vertex $F$-free graph. An $F$-free graph with $\ex(n,F)$ edges is called an {\it extremal graph}.
   The first result in this area  is perhaps Mantel's Theorem for $\ex(n, K_3)$. The celebrated Tur\'an theorem \cite{Turan} gives $\ex(n, K_{t})$ for all $t \geq 3$. Erd\H{o}s \cite{E62} extended Tur\'an's theorem and determined the maximum number of $K_s$ in an $n$-vertex $K_t$-free graph with $s<t$. In general, for two graphs $F$ and $T$, the {\it generalized Tur\'an number} $\ex(n,T,F)$ equals the maximum number of copies of $T$ in an $n$-vertex $F$-free graph. Following Erd\H{o}s' result on $\ex(n,K_s,K_t)$,  there were only limited studies addressing the generalized Tur\'an number for a few decades. Because of Alon and Shikhelman's systematic study \cite{AS16} of $\ex(n,T,F)$, this topic has recently attracted a great deal of attention. For a detailed introduction to this topic, one can refer to the survey \cite{GP26} by Gerbner and  Palmer.

The study of $\ex(n,K_3,F)$ is of particular interest. For a graph $G$, we use $t(G)$ to denote the number of triangles in $G$. Given a vertex $v\in V(G)$, let $\Nbr(v)=\{u:u\text{ is
adjacent to }v\}$ be the {\it neighborhood} of $v$ and let $\dG(v)=|\Nbr(v)|$ be its {\it degree}. One can easily observe the following identity
\begin{equation*}
 t(G)=\frac13\sum_{v\in V(G)} e(G[\Nbr(v)]),
\end{equation*}
where $e(G[\Nbr(v)])$ is the number of edges in the subgraph induced by $\Nbr(v)$. To get an upper bound for $t(G)$, one can assume that the subgraph induced by the neighborhood of each vertex is $H$-free for a given $H$.
If we define the graph obtained from $H$ by joining a new vertex to every vertex of $H$ as the suspension  $\widehat H$ of $H$, then $G$ being  $\widehat H$-free is equivalent to $G[N_G(v)]$ being $H$-free for each vertex $v$. This idea connects the classical Tur\'an problem to the triangle-counting problem.  Moreover,
\[
 \ex(n,K_3,\widehat H)
 \le \frac13\sum_{v\in V(\Gamma)} \ex(d_{\Gamma}(v),H),
\]
 where $\Gamma$ is an extremal graph for $\ex(n,K_3,\widehat{H})$.

Mubayi and Mukherjee~\cite{MM23} initiated the systematic study of
$\ex(n,K_3,\widehat H)$ for  bipartite $H$.
Let $P_k$ be the path on $k$ vertices.  Mubayi and
Mukherjee~\cite{MM23}  proposed the following  conjecture.
\begin{conjecture} \label{MMC}
  For every fixed $k\ge 4$, as $n \to \infty$,
  \begin{equation*}
      \ex(n,K_3,\widehat P_k)
  =\left\lfloor\frac{k-2}{2}\right\rfloor\frac{n^2}{8}+o(n^2).
  \end{equation*}
\end{conjecture}
They verified this conjecture for $k=4,5,6$. In addition, Mukherjee \cite{M24} determined the exact value of $ \ex(n,K_3,\widehat P_4)$ for all $n$. Note that a similar result was proved by Gerbner \cite{G22} for large $n$.  Using a stability theorem for the Tur\'an problem of a non-bipartite graph, Hei, Hou, and Ma \cite{HHM} recently determined
the exact value of $\ex(n,K_3,\widehat P_5)$ for sufficiently large $n$.

In this paper, we determine the asymptotic value of $\ex(n,K_3,\widehat P_k)$ for every $k\ge 4$ using a unified method and thus prove  Conjecture \ref{MMC}.
\begin{theorem}\label{thm:main}
For every fixed integer $k\ge 4$, as  $n \to \infty$,
\[
  \ex(n,K_3,\widehat P_k)
  =
  \left\lfloor\frac{k-2}{2}\right\rfloor\frac{n^2}{8}+o(n^2).
\]
\end{theorem}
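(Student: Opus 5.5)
The proof has two parts: an explicit construction for the lower bound, and a weighted double count of triangles for the upper bound. Throughout write $m=\lfloor (k-2)/2\rfloor$, so that $2m+1\le k-1$.

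\emph{Lower bound.} Split the $n$ vertices into sets $A$ and $B$ of size $n/2$ each and join every vertex of $A$ to every vertex of $B$. Inside $A$, place vertex-disjoint copies of $K_{m+1}$ covering all but at most $m$ vertices; leave $B$ independent. Then $e(G[A])=\frac{m}{2}\cdot\frac n2-O(1)$. Every edge inside $A$ forms a triangle with each vertex of $B$, so $t(G)\ge \frac{mn}{4}\cdot\frac n2-O(n)=m\frac{n^2}{8}-O(n)$.

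To see that $G$ is $\widehat P_k$-free, we check that every neighbourhood is $P_k$-free.
\begin{itemize}
\item For $b\in B$, $G[\Nbr(b)]=G[A]$ is a disjoint union of cliques on $m+1\le k-1$ vertices, so it contains no $P_k$.
\item For $a\in A$, $G[\Nbr(a)]$ is $B$ together with the $m$ other vertices of $a$'s clique. Here $B$ is independent and is completely joined to those $m$ vertices. Any path must place a clique vertex between any two consecutive vertices of $B$, so it has at most $m+1$ vertices of $B$ and at most $2m+1\le k-1$ vertices in total.
\end{itemize}

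\emph{Upper bound.} The goal is $\sum_v e(G[\Nbr(v)])\le 3m\frac{n^2}{8}+o(n^2)$ for every $\widehat P_k$-free $G$. Erd\H{o}s--Gallai alone is too weak, giving only $t(G)\le \frac{k-2}{6}\sum_v \dG(v)$. The first key input is the Kopylov / Balister--Gy\H{o}ri--Lehel--Schelp structure of $P_k$-free graphs. A connected $P_k$-free graph $F$ on $d$ vertices, with $d$ large compared to $k$, has $e(F)\le \binom m2+m(d-m)+O_k(1)$. Moreover, if $F$ is close to this bound, then it contains a set of at most $m$ ``hubs'' that cover all but $O_k(1)$ of its edges, because long paths can alternate through many high-degree vertices.

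Applying this inside each neighbourhood, every triangle $uvw$ falls into one of three types:
\begin{enumerate}
\item[(a)] the edge $uw$ lies in a component of $G[\Nbr(v)]$ with fewer than $k$ vertices;
\item[(b)] one of $u,w$ is a hub of its component in $G[\Nbr(v)]$, i.e.\ has degree at least $k$ there, equivalently large codegree with $v$;
\item[(c)] an exceptional triangle, of which there are $O_k(n)$ per vertex and hence $O(n^2)$ only in a negligible weighted sense, to be absorbed by choosing thresholds depending on $\varepsilon$.
\end{enumerate}

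The second step is to turn this per-vertex information into a global inequality. For each vertex $v$, let $H_v$ be the set of hub vertices in $\Nbr(v)$; the structure above gives $|H_v\cap C|\le m$ for every component $C$. A type-(b) triangle uses an edge $vh$ with $h\in H_v$ together with a third common neighbour. I would reweight each triangle among its three vertices and then sum over the bipartite-like ``hub--leaf'' incidences. The aim is to reduce the count to a Mantel-type bound: the pairs $(v,h)$ with $h$ a hub of $v$, together with the leaves they share, should behave like a triangle-free (bipartite) structure with at most $n^2/4$ pairs, each carrying a factor at most $m/2$. Triangles of type (a) should be controlled in the same way, since small components again give at most $\frac{k-2}{2}\le$ (roughly) $m+\tfrac12$ edges per vertex.

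\emph{Main obstacle.} The hard step is this global accounting. I must show that the local bound of about $m\,\dG(v)$ edges per neighbourhood cannot be attained simultaneously at all vertices. The only way to reach the target is to accept a bipartite-type pattern in which each triangle is ``paid for'' at a single vertex, and the hub relation forces the $\frac{1}{2}\cdot\frac{n^2}{4}$ density. My first attempt would be to prove a stability statement: after deleting $o(n^2)$ triangles, the hub relation becomes triangle-free, via the removal lemma applied to the $O(n^2)=o(n^3)$ copies of $K_4$. Mantel's theorem on the hub graph would then give the factor $\frac{n^2}{4}$, and multiplying by $\frac m2$ yields $m\frac{n^2}{8}+o(n^2)$.
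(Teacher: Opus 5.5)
Your lower-bound construction and its $\widehat P_k$-freeness check are correct; the paper omits this part and cites Mubayi--Mukherjee for it. The upper bound, however, is not proved. The ``global accounting'' you flag as the main obstacle is the entire content of the theorem, and the pieces you propose for it do not fit together.

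Your trichotomy is not quantitatively controlled. Type (c) is bounded only by $O_k(n)$ per vertex, i.e.\ $O(n^2)$ in total, which is the order of the main term, and no choice of thresholds repairs this. For $m\ge2$, take a vertex joined to one vertex of each of many disjoint copies of $K_m$. This is a connected $P_k$-free neighbourhood, since its longest path has $2m+1\le k-1$ vertices. Yet about $\frac{m-1}{2}d_G(v)$ of its edges lie in a large component and avoid every vertex of degree $\ge k$. The stability statement you quote describes only near-extremal components.

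Mantel is also aimed at the wrong graph. In your own construction the hub relation is just the $O(n)$ clique edges inside $A$, which contain triangles when $m\ge2$. The graph that must be triangle-free with at most $n^2/4$ edges is the complementary low-codegree relation (the $A$--$B$ edges). Each of its edges lies in about $m$ triangles, each using two such edges.

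The missing idea is a way to charge each neighbourhood's excess over $r=m$ per low-codegree neighbour to a quantity whose total is $o(n^2)$. The paper does it as follows.
\begin{itemize}
\item Since $t(G)=O(n^2)=o(n^3)$, the triangle removal lemma applied to the triangles of $G$ (not to $K_4$'s) gives $R_0$ with $|R_0|=o(n^2)$ and $G-R_0$ triangle-free.
\item Deleting also the $o(n^2)$ edges of codegree larger than $\omega_n$ leaves a triangle-free $H$. All but $o(n^2)$ triangles then have exactly one edge in $R_0$ and two in $H$.
\item A bipartite double cover makes $H$-edges crossing and $R_0$-edges internal to the parts.
\item Crossing edges with at least $r+1$ common neighbours in one part number $O(|R_0|)$ by Erd\H{o}s--Gallai, so they lie in only $O(\omega_n|R_0|)=o(n^2)$ triangles.
\item For the remaining edges, apply the weighted inequality $e(X,Y)+2e(Y)\le r|Y|+C_k|X|$ of Theorem~\ref{lem:weighted} in each neighbourhood, with $X=N_A(a)$ the $R_0$-neighbours. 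This gives $2|\widetilde T_2|\le r|\widetilde H|+O(|R_0|)$, and Mantel on $H$ finishes.
\end{itemize}
That inequality, whose error term is paid by $R_0$-degrees rather than full degrees, is the technical heart of the paper. Its proof uses a rooted Erd\H{o}s--Gallai bound, bouquets, and Berge paths in a bounded-multiplicity hypergraph. Nothing in your sketch plays its role.
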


There are a number of related results in the same spirit.  Note that the fan $F_k$ is the suspension of $M_k$, where $M_k$ is the matching of size $k$.  Erd\H{o}s
and S\'os \cite{ESos} established the exact value of
$\ex(n,K_3, F_2)$.  Zhu, Chen, Gerbner, Gy\H{o}ri, and Hama Karim \cite{Zhu} obtained the exact value of $\ex(n,K_3, F_k)$ for each $k$ and large $n$.  The suspension of a $k$-cycle $C_k$ is the wheel $W_k$ on $k+1$ vertices. There are several papers \cite{DNP26,FGK,Zhou} studying $\ex(n,K_3,W_{\geq k})$ for $k=3,4$, where $W_{\geq k}$ is the set of wheels with at least $k+1$ vertices.

Throughout the remainder of the paper, $k\ge4$ is fixed and
\[
  r:=\left\lfloor\frac{k-2}{2}\right\rfloor.
\]
Thus $k\in\{2r+2,2r+3\}$ and, in particular, $2r+2\ge k-1$. All
implicit constants depend on $k$.

The proof of Theorem \ref{thm:main} relies on the following auxiliary result which may be of independent interest.

\begin{theorem}\label{lem:weighted}
If $G$ is a $P_k$-free graph with a  partition $V(G)=X\cup Y$ such that $X$ is
an independent set  and $d_X(y)\le r$ as well as $d_{Y}(y)\le r$ for every $y\in Y$,
then
\begin{equation}\label{eq:weighted-conclusion}
  e_G(X,Y)+2e_G(Y)\le r|Y|+C_k|X|,
\end{equation}
where $C_k$ is a constant depending on $k$.
\end{theorem}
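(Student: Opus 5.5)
The plan is to reformulate \eqref{eq:weighted-conclusion} as a discharging statement. Since $e_G(X,Y)+2e_G(Y)=\sum_{y\in Y}(d_X(y)+d_Y(y))$, the inequality is equivalent to
\[
\sum_{y\in Y}\bigl(d_G(y)-r\bigr)\le C_k|X|.
\]
So we give each $y\in Y$ the charge $\mu(y)=d_G(y)-r$, which lies between $-r$ and $r$. We must show that the total positive charge is paid for by the negative charge, up to $O(|X|)$. We may work componentwise, and we may assume every component meets $X$. Indeed, a component inside $Y$ has all degrees at most $r$, so its total charge is at most $0$.

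The first step separates the vertices of $X$ by degree. Fix a threshold $D=D(k)$, say $D=10k^2$. Let $X_{\rm low}$ be the vertices of $X$ of degree less than $D$, and let $B=X\setminus X_{\rm low}$. Every vertex of degree at least $D$ lies in $B$, because vertices of $Y$ have degree at most $2r\le k-2$. Edges from $Y$ to $X_{\rm low}$ number at most $D|X|$, so we may discard them at cost $D|X|$. Let $d'(y)=d_B(y)+d_Y(y)$. It then suffices to show
\[
\sum_{y\in Y}\bigl(d'(y)-r\bigr)\le C'|B|.
\]
The point of passing to $B$ is that each $x\in B$ has far more than $k$ neighbours, all in $Y$. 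So in any partial path of fewer than $k$ vertices, a vertex of $B$ can always be entered and left through fresh neighbours not yet used.

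The second, key step is a local path-building lemma exploiting $P_k$-freeness. Suppose $y$ has $a=d_B(y)$ neighbours $x_1,\dots,x_a$ in $B$ and $b=d_Y(y)$ neighbours in $Y$. Using the high degree of the $x_i$, we concatenate segments $x_i\,y_i'$, with $y_i'$ a fresh neighbour of $x_i$, on both sides of $y$. We also try to route through $Y$-neighbours of $y$ and continue from them, again using $B$-vertices. Heuristically, each $B$-neighbour of $y$ contributes two vertices to a path through $y$ and each $Y$-neighbour contributes at least one. Hence $y$ with $a+b>r$, combined with rich neighbourhoods at distance two, should yield a path on at least $2r+2\ge k-1$ vertices. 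One further extension through a high-degree vertex then gives $P_k$. The conclusion I aim for is the following. If $\mu'(y)=d'(y)-r>0$, then $y$ sits in a bounded-radius neighbourhood of some $x\in B$. Moreover, most of the $Y$-neighbours of $x$ have $d'\le r-1$; otherwise, attaching their extra edges produces a path of length $k$.

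The third step is the discharging itself. Each $y$ with $\mu'(y)>0$ sends its (at most $r$) charge to an adjacent or nearby $B$-vertex $x$. That $x$ must absorb it using the negative charges of its low-$d'$ neighbours in $Y$, with only $O(1)$ leftover per vertex of $B$. I expect the main obstacle to be this step, together with the precise version of the path lemma behind it. The difficulty is that a single $x\in B$ may receive positive charge from many $Y$-vertices, so $x$ must have many deficient neighbours to compensate. Proving this requires the path argument to use the parity structure: $k\in\{2r+2,2r+3\}$, and paths alternate between $B$ and $Y$. I would first try the following strategy. Take a longest path in the component and argue, as in the Erdős–Gallai proof, that its endpoints have all neighbours on the path. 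The vertices of $B$ on a longest path are then few, at most $k$. This shows each component has a bounded ``core'' of $B$-vertices. The positive-charge vertices are forced to lie within distance one of this core, and their number is $O(|B\cap\text{component}|)$ once the leaves hanging from the core are counted as deficient.
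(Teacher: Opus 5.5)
There is a genuine gap. Your reformulation $\sum_{y\in Y}(d_G(y)-r)\le C_k|X|$ and the deletion of the edges to low-degree vertices of $X$ are both correct. From there on, however, the proposal is a plan rather than a proof, and its one concrete structural claim is false.

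\textbf{Counterexample to the structural claim.} Take $k=6$ (so $r=2$), a single vertex $x\in X$, and $m\ge D$ disjoint paths $y_1^iy_2^iy_3^i$ in $Y$, with $x$ joined to each middle vertex $y_2^i$. This is a tree whose longest path has $5$ vertices, so all hypotheses hold and $x\in B$. Yet every $Y$-neighbour of $x$ has $d'=r+1$, none has $d'\le r-1$, and no $P_6$ appears.

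Your discharging rule then leaves a surplus of $m$ at $x$. The compensating negative charge sits on the leaves $y_1^i,y_3^i$, inside the same component of $G[Y]$ as the positive vertex, not next to $x$. More generally, a path through $y$ uses at most two of its neighbours, so excess degree at a single vertex forces nothing; only excess summed over a whole component of $G[Y]$ does.

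\textbf{The closing step.} The ``bounded core'' step is a non sequitur as well. A longest path having fewer than $k$ vertices does not bound the number of $B$-vertices in a component. For $k=8$, join $x$ to $y_1,\dots,y_m$ and each $y_i$ to its own $x_i$ carrying many pendant leaves: the longest path has $7$ vertices, yet the component contains $m+1$ vertices of $B$.

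\textbf{The missing idea.} You need to do the accounting per component $C$ of $G[Y]$ rather than per vertex. These components have maximum degree at most $r$ and diameter at most $k-2$, hence boundedly many vertices. So the excess $\gamma(C)=e_G(C,X)+2e(C)-r|C|$ is bounded by a constant, and it suffices to show that only $O(|X|)$ components have $\gamma(C)>0$. The positive excess is never absorbed by negative charge; it is simply paid for by the $C_k|X|$ term.

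The paper obtains this count from the multi-hypergraph on $X$ whose edges are the sets $N_X(C)$ with $\gamma(C)>0$:
\begin{itemize}
\item It has no Berge path of length $r+1$, since that would give a path with $2r+2\ge k-1$ edges in $G$.
\item Its multiplicity is bounded, because $s+1$ isomorphic positive-excess components attached to the same $s$-set would contain $P_k$. This is the bouquet lemma, proved via the rooted Erd\H{o}s--Gallai bound $\ell_G(v)\ge (2e(G)-d_G(v))/(n-1)$ applied inside the component.
\item Repeatedly deleting the first vertex of a longest Berge path then bounds the number of hyperedges by a constant times $|X|$.
\end{itemize}
Your sketch contains no substitute for either counting step. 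The high-degree reduction does not remove the need for them: a triangle in $Y$ fully joined to a high-degree $x$ whose other neighbours are leaves already gives a positive-excess component.
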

The following  well-known Erd\H{o}s--Gallai theorem will be frequently used in the proof of our main result.
\begin{theorem}[Erd\H{o}s--Gallai~\cite{EG}]\label{lem:EG}
If a graph $G$ does not contain  $P_k$ for $k \geq 2$, then
\[
  e(G)\le \frac{k-2}{2}|V(G)|.
\]
\end{theorem}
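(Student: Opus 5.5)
We argue by induction on $|V(G)|$, reducing to the case of a connected graph of large minimum degree and then using a longest-path argument of P\'osa--Dirac type. The key step is a lemma showing that a connected graph of minimum degree $\delta$ on more than $2\delta+1$ vertices contains a path on $2\delta+1$ vertices.

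\emph{Reductions.} Since both sides of the inequality are additive over connected components, we may assume $G$ is connected. If $|V(G)|\le k-1$, then
\[
e(G)\le \binom{|V(G)|}{2}=\frac{|V(G)|-1}{2}|V(G)|\le \frac{k-2}{2}|V(G)|,
\]
and we are done. Suppose some vertex $v$ has $d_G(v)\le \frac{k-2}{2}$. The graph $G-v$ is still $P_k$-free, so by induction
\[
e(G)\le \frac{k-2}{2}(|V(G)|-1)+\frac{k-2}{2}=\frac{k-2}{2}|V(G)|.
\]
We may therefore assume that $G$ is connected, that $n:=|V(G)|\ge k$, and that the minimum degree $\delta$ satisfies $\delta>\frac{k-2}{2}$. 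Since $\delta$ is an integer, this gives $2\delta\ge k-1$.

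\emph{Longest path lemma.} Let $P=v_1v_2\cdots v_m$ be a longest path in $G$. We claim that either $m=n$ or $m\ge 2\delta+1$. By maximality of $P$, every neighbor of $v_1$ and every neighbor of $v_m$ lies on $P$.

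Suppose $d(v_1)+d(v_m)\ge m$. Consider the sets $\{i: v_1v_{i+1}\in E\}$ and $\{i: v_iv_m\in E\}$, both contained in $\{1,\dots,m-1\}$. By pigeonhole they share an index $i$. Then
\[
v_1v_2\cdots v_i\,v_m v_{m-1}\cdots v_{i+1}\,v_1
\]
is a cycle on exactly the vertex set $V(P)$. (If $v_1v_m\in E$, this is immediate.) If $m<n$, connectivity of $G$ gives a vertex outside $P$ adjacent to some vertex of this cycle. Opening the cycle at that vertex yields a path on $m+1$ vertices, contradicting maximality. Hence if $m<n$, then $2\delta\le d(v_1)+d(v_m)\le m-1$, that is, $m\ge 2\delta+1$.

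\emph{Conclusion.} In either case $m\ge \min(n,2\delta+1)\ge k$, using $n\ge k$ and $2\delta+1\ge k$. So $G$ contains $P_k$, a contradiction, and the theorem follows.

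The only delicate point is the rotation/crossing step that closes $P$ into a spanning cycle of $V(P)$. One must check the boundary case where $v_1$ and $v_m$ are adjacent, and make sure the index sets are taken in $\{1,\dots,m-1\}$ so that pigeonhole applies.
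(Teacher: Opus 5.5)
Your argument is correct and complete. The paper gives no proof of Theorem~\ref{lem:EG} to compare against: it quotes the result from~\cite{EG} and uses it as a black box.

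What you give is the standard textbook proof. You remove components and vertices of degree at most $\frac{k-2}{2}$ by induction. In the remaining connected case with $2\delta\ge k-1$, you use the Dirac-type fact that a longest path $v_1\cdots v_m$ either spans $G$ or has $m\ge 2\delta+1$, via the crossing-index pigeonhole that closes $V(P)$ into a cycle. The degenerate ``cycle'' for $m\le 2$ never matters, since you only use the cycle when $m<n$, and then connectivity already forces $m\ge 3$.

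The paper's own Lemma~\ref{lem:rooted-EG}, which is proved from scratch without Theorem~\ref{lem:EG}, gives a second route to the theorem. Take a connected $P_k$-free graph on $n\ge 2$ vertices and let $v$ be an endpoint of a longest path. All neighbours of $v$ lie on that path, so $d_G(v)\le k-2$. Also every path has length at most $k-2$, so $\ell_G(v)\le k-2$. Lemma~\ref{lem:rooted-EG} then gives $2e(G)\le \ell_G(v)(n-1)+d_G(v)\le (k-2)n$, and summing over components finishes.

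The two routes buy different things. The paper's route replaces your rotation and cycle-closing step with a deletion-and-averaging induction. Its output is a rooted bound on $\ell_G(v)$, which is exactly what Lemma~\ref{lem:bouquet} needs, since there paths must start at prescribed vertices. Your route instead yields the stronger unrooted statement that a connected graph contains a path on $\min\{n,2\delta+1\}$ vertices.
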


{\bf Notation:} All graphs are finite, simple, and undirected. For a graph $G$, we write
$e(G)$ and $t(G)$ for its numbers of edges and triangles, respectively.
For disjoint sets $X,Y\subseteq V(G)$, let $e_G(X,Y)$ denote the number of edges with one
endpoint in $X$ and the other in $Y$. For $X\subseteq V(G)$, we write $e_G(X)=e(G[X])$, where $e(G[X])$ is the number of edges in the subgraph induced by $X$. Additionally, we define $N_X(v)=N_G(v) \cap X$ and $d_X(v)=|N_X(v)|$ for a vertex $v$.
 The
subscript is omitted when the  graph is clear from the context. For two distinct vertices $u$ and $v$, their \emph{codegree} is $ \codeg_G(uv):=|N_G(u)\cap N_G(v)|.$

The rest of the paper is organized as follows. We will prove  Theorem \ref{lem:weighted} in Section \ref{sec:weighted-proof}.
Theorem \ref{thm:main} will be established in
Section \ref{sec:main-proof}.

\section{Proof of Theorem \ref{lem:weighted}}\label{sec:weighted-proof}

For a connected graph $G$ and a vertex $v\in V(G)$, define $\ell_G(v)$ as the length of  a longest path of $G$ starting at $v$. We will use the following variant of the Erd\H{o}s--Gallai theorem.

\begin{lemma}\label{lem:rooted-EG}
Let $G$ be a connected graph on $n \ge2$ vertices and let
$v\in V(G)$. Then
\[
  \ell_G(v)\ge \frac{2e(G)-d_G(v)}{n-1}.
\]
Equivalently, $2e(G)-d_G(v)\le \ell_G(v)(n-1)$.
\end{lemma}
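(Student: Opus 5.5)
I will argue by induction on $n$, deleting a suitable endpoint of a longest path starting at $v$. The base case $n=2$ is a single edge $vv'$, where $2e(G)-d_G(v)=1=\ell_G(v)(n-1)$. For the inductive step, write $\ell=\ell_G(v)$ and fix a longest path $P=x_0x_1\cdots x_\ell$ with $x_0=v$.

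First, two standard facts about $P$.
\begin{itemize}
\item By maximality, every neighbour of $x_\ell$ lies on $P$, so $d_G(x_\ell)\le \ell$.
\item $x_\ell$ is not a cut vertex. Otherwise some component of $G-x_\ell$ misses $P$, and any vertex $w$ in it adjacent to $x_\ell$ would give the longer path $Pw$ from $v$.
\end{itemize}
Hence $G'=G-x_\ell$ is connected on $n-1$ vertices, provided $x_\ell\neq v$ (true since $\ell\ge1$) and $n-1\ge2$. Moreover $\ell_{G'}(v)\le \ell$ and $d_{G'}(v)=d_G(v)-\epsilon$, where $\epsilon=1$ if $vx_\ell\in E(G)$ and $\epsilon=0$ otherwise. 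The induction hypothesis gives $2e(G')-d_{G'}(v)\le \ell(n-2)$, so
\[
2e(G)-d_G(v)\le \ell(n-2)+2d_G(x_\ell)-\epsilon .
\]
It therefore suffices to find a longest path from $v$ whose endpoint $z$ satisfies $2d_G(z)-\epsilon_z\le \ell$, where $\epsilon_z$ records adjacency of $z$ to $v$.

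To find such an endpoint I will use P\'osa rotations with the start $v$ fixed. If $x_\ell x_i\in E(G)$ with $1\le i\le \ell-2$, then $x_0\cdots x_i x_\ell x_{\ell-1}\cdots x_{i+1}$ is another longest path from $v$, ending at $x_{i+1}$. Let $S$ be the set of all endpoints reachable by repeated rotations. Each $z\in S$ has all its neighbours on $P$, and the usual P\'osa argument gives $N(S)\subseteq S^-\cup S^+\cup\{\text{few vertices}\}$ along $P$. From this I would aim to show that if every $z\in S$ has $2d(z)-\epsilon_z>\ell$, then two rotated endpoints give a cycle $C$ through a tail of $P$ of length more than $\ell-j$, attached at some $x_j$. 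Attaching the initial segment $x_0\cdots x_j$ and going round $C$ would then produce a path from $v$ longer than $\ell$, a contradiction. The term $\epsilon_z$ is exactly what absorbs the case where the endpoint is adjacent to $v$ itself: there $x_0\cdots x_\ell x_0$ is a cycle through $v$, and we only need $2d(z)\le \ell+1$.

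The main obstacle is this last step, which has the flavour of Kopylov's theorem on long cycles. The degree threshold $\ell/2$ is sharp: in $K_{\ell+1}$ with $v$ a vertex, the formula holds with equality, yet every endpoint has degree $\ell$. So the plan must be refined, not just the estimate.

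As a refinement, when the endpoints have large degree I will delete a whole block instead of a single vertex. Specifically, let $B$ be the end-block containing the rotation endpoints, with cut vertex $x_j$, or $B$ the block of $v$. Then:
\begin{enumerate}
\item Bound $2e(B)$ by $\ell\,(|B|-1)$ using the fact that $B$ is $2$-connected, so a longest path inside $B$ from $x_j$ has at most $\ell-j$ edges, together with the Erd\H{o}s--Gallai type bound of Theorem~\ref{lem:EG} for $B$.
\item Apply induction to the graph with $B\setminus\{x_j\}$ removed.
\end{enumerate}
This reduces to checking the inequality on $2$-connected pieces, where Dirac/Kopylov-type bounds on path lengths from a fixed vertex are available.
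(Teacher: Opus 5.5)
There is a genuine gap: the inductive step is never carried out.

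\textbf{The single-vertex deletion.} Deleting an endpoint of a longest $v$-path requires an endpoint $z$ with $2d_G(z)-\epsilon_z\le\ell$. As you observe yourself, in $K_{\ell+1}$ (with $\ell\ge2$) no such endpoint exists. So the implication you hoped to get from P\'osa rotations (all endpoints of large degree $\Rightarrow$ a path from $v$ longer than $\ell$) is false, not merely hard.

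\textbf{The block refinement.} It does not repair this, for three reasons.
\begin{enumerate}
\item Step~1 does not follow from Theorem~\ref{lem:EG}. That theorem gives $2e(B)\le p\,|B|$, where $p$ is the length of a longest path \emph{anywhere} in $B$. You only control paths \emph{starting at} $x_j$, and in a $2$-connected block this bounds $p$ only by about $2(\ell-j)$. Even $p\le\ell$ would give $\ell|B|$, not $\ell(|B|-1)$.
\item When $B$ is the block of $v$, the inequality $2e(B)\le\ell(|B|-1)$ is false: $B=G=K_{\ell+1}$ gives $\ell(\ell+1)>\ell^2$. There the $-d_G(v)$ term is indispensable.
\item Most importantly, when $G$ is $2$-connected there is no proper block to remove. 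Your last sentence defers exactly this case, which is the real content of the lemma, to unspecified ``Dirac/Kopylov-type bounds''.
\end{enumerate}

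\textbf{What can be salvaged.} Your reduction to blocks can be made rigorous, but by induction rather than by Theorem~\ref{lem:EG}. For an end block $B$ with cut vertex $c\ne v$, apply the induction hypothesis to $B$ rooted at $c$, and use $d_B(c)\le|B|-1$ together with $\ell_G(v)\ge 1+\ell_B(c)$. If $c=v$, simply add the two inequalities. The paper's Case~1 is the special case of splitting at $v$ when $G-v$ is disconnected.

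\textbf{The missing idea.} What you still need is an argument when $H:=G-v$ is connected. Here the paper deletes $v$ itself rather than a path endpoint. It chooses $u\in N_G(v)$ minimizing $d_H(u)$, applies induction to $H$ rooted at $u$, and prepends the edge $vu$. Averaging gives $d_H(u)\le\min\{h-1,\,2e(H)/d_G(v)\}$ with $h=n-1$. A short two-case computation shows this minimum is at most $b=\bigl(h(h-1)+2e(H)-(h-1)d_G(v)\bigr)/h$. That is precisely the condition for $1+\frac{2e(H)-d_H(u)}{h-1}\ge\frac{2e(H)+d_G(v)}{h}=\frac{2e(G)-d_G(v)}{n-1}$. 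No rotations or Kopylov-type results are needed.
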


\begin{proof}
We prove the lemma by induction on $n$. The base case where $n=2$ is trivial.

Assume that $n\ge3$ and  the assertion holds for every connected
graph with fewer than $n$ vertices. There are two cases.

\smallskip
\noindent\emph{Case 1: $G-v$ is disconnected.}
Let $C_1,\dots,C_q$ be the connected components of $G-v$ with $q\ge2$. For each $i\in\{1,\dots,q\}$, set
\[
  m_i:=|C_i|,\qquad
  e_i:=e(G[C_i]),\qquad
  d_i:=d_{C_i}(v).
\]
Observe that $d_i\ge1$ for every $i$ and
$G_i:=G[C_i\cup\{v\}]$ is connected with $2 \leq m_i+1<n$ vertices. The
induction hypothesis applied to $G_i$  gives
\[
  \ell_{G_i}(v)
  \ge
  \frac{2e(G_i)-d_{G_i}(v)}{|V(G_i)|-1}
  =\frac{2(e_i+d_i)-d_i}{m_i}
  =\frac{2e_i+d_i}{m_i}.
\]
Every path in $G_i$ beginning with $v$ is also a path in $G$ beginning with
$v$. Therefore
\begin{align*}
  \ell_G(v)\ge \max_{1\le i\le q}\ell_{G_i}(v)\ge \max_{1\le i\le q}\frac{2e_i+d_i}{m_i}\ge \frac{\sum_{i=1}^q(2e_i+d_i)}{\sum_{i=1}^q m_i}.
\end{align*}
The last inequality is true because  the maximum of finitely many ratios is no less than their weighted average.
 Finally,
\[
  \sum_{i=1}^q m_i=n-1\qquad\text{and}\qquad\sum_{i=1}^q(2e_i+d_i)=2e(G)-d_G(v),
\]
because $e(G)=\sum_{i=1}^q (e_i+d_i)$ and $d_G(v)=\sum_{i=1}^q d_i$. This proves the
desired inequality for this case.

\smallskip
\noindent\emph{Case 2: $H:=G-v$ is connected.}
Let $h:=|V(H)|=n-1$. It is trivial that  $h \ge2$ and $d_G(v) \ge1$. Define
\[
  a:=\sum_{u \in N_G(v)} d_H(u).
\]
On the one hand,  $a \le 2e(H)$. On the other
hand, every vertex of $H$ has degree at most $h-1$, so
$a\le d_G(v)(h-1)$. Let
\[
  b:=\frac{h(h-1)+2e(H)-(h-1)d_G(v)}{h}.
\]
We claim that
\begin{equation}\label{eq:B-min-bound}
  \min\left\{h-1,\frac{2e(H)}{d_G(v)}\right\}\le b.
\end{equation}
If $2e(H) \ge d_G(v)(h-1)$, then
\[
  b-(h-1)=\frac{2e(H)-d_G(v)(h-1)}{h}\ge 0.
\]
If $2e(H)\le d_G(v)(h-1)$, then, noting $d_G(v) \le h$,
\[
  b-\frac{2e(H)}{d_G(v)}
  =\frac{(h-d_G(v))\bigl(d_G(v)(h-1)-2e(H)\bigr)}{d_G(v) h}\ge0.
\]
This proves the inequality \eqref{eq:B-min-bound}.

By averaging, there is a vertex  $u \in N_G(v)$ with $d_H(u)\le \tfrac {a}{d_G(v)}
  \le\min\left\{h-1,\tfrac{2e(H)}{d_G(v)}\right\}
  \le b$ as we showed $a \leq 2e(H)$ and $a \leq d_G(v)(h-1)$ above.
By the induction hypothesis, a longest path in $H$ beginning with $u$ together with the edge $vu$
forms a  path in $G$ beginning with $v$.
Consequently,
\begin{align*}
  \ell_G(v)\ge 1+\ell_H(u)\ge 1+\frac{2e(H)-d_H(u)}{h-1}\ge 1+\frac{2e(H)-b}{h-1}=\frac{2e(H)+d_G(v)}{h}=\frac{2e(G)-d_G(v)}{n-1}.
\end{align*}
The second-to-last equality can be verified by plugging in the definition of $b$.  For the last equality, we use $e(G)=e(H)+d_G(v)$. This completes the induction.
\end{proof}

Next, we introduce a few more definitions.
 A {\it multi-hypergraph} is a pair $\cH=(V,\mathcal E)$, where $V$ is the vertex set and $\cal E$ is
 a multiset of nonempty subsets of $V$.  The {\it multiplicity} $m(e)$ of an edge $e \in \cal E$ is the number of occurrences of $e$ in $\cal E$ and the multiplicity of $\cH$ equals $\max\{m(e): e \in \cal E\}$.
A \emph{Berge path} of length $q$ in a multi-hypergraph $\cH=(V,\mathcal E)$ is an alternating
sequence $v_0,e_1,v_1,e_2,\dots,e_q,v_q$,
where $v_0,\dots,v_q \in V$ are distinct vertices
and $v_{i-1},v_i\in e_i$ for every $i\in\{1,\dots,q\}$. In addition, the number of occurrences of $e_i$ in the sequence is at most $m(e_i)$.

\begin{lemma}\label{lem:hypergraph}
Let $\cH=(V, {\cal E})$ be a multi-hypergraph with
multiplicity at most $\mu$. If $\cH$ contains no Berge path of length
$r+1$, then
\begin{equation*}
  e(\cH)\le\bigl(r+\mu2^{r+1}\bigr)|V(\cH)|.
\end{equation*}
\end{lemma}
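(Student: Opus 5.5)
The plan is to reduce the hypergraph to an ordinary graph by assigning each hyperedge occurrence a distinct pair of its vertices, and then apply Theorem~\ref{lem:EG}.

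\emph{Step 1: a maximum matching of edge occurrences to pairs.} Treat the $e(\cH)$ edge occurrences of $\cH$ as distinct objects, so an edge $e$ of multiplicity $m(e)$ gives $m(e)$ copies. Form the bipartite graph $B$ with these copies on one side and the $2$-subsets of $V$ on the other. A copy of $e$ is joined to every $2$-subset $\{x,y\}\subseteq e$. Fix a maximum matching $\cM$ in $B$. Let $F$ be the graph on $V$ whose edges are the pairs covered by $\cM$.

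\emph{Step 2: $F$ is $P_{r+2}$-free.} Suppose $v_0v_1\cdots v_{r+1}$ is a path in $F$. Each pair $v_{i-1}v_i$ is matched to a distinct edge occurrence $e_i\ni v_{i-1},v_i$. Hence $v_0,e_1,v_1,\dots,e_{r+1},v_{r+1}$ is a Berge path of length $r+1$. It is legitimate because a given edge $e$ appears in it at most as many times as it has copies, namely at most $m(e)$. This contradicts the hypothesis. By Theorem~\ref{lem:EG} with $k=r+2$, the number of matched copies is $|\cM|=e(F)\le r|V|$.

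\emph{Step 3: unmatched copies are cliques of $F$.} Let a copy of $e$ be unmatched. If some pair $\{x,y\}\subseteq e$ were unmatched, we could add the edge between that copy and $\{x,y\}$ to $\cM$, contradicting maximality. So every pair inside $e$ lies in $F$, i.e.\ $e$ is a clique of $F$. This includes $|e|=1$, where the condition is vacuous. Each vertex set carries at most $\mu$ copies, so the number of unmatched copies is at most $\mu$ times the number of nonempty cliques of $F$.

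\emph{Step 4: counting cliques.} A $P_{r+2}$-free graph is $r$-degenerate. Indeed, if some subgraph had minimum degree at least $r+1$, a greedy longest-path argument would give a path on $r+2$ vertices. So order $V$ so that each vertex has at most $r$ neighbours later in the order. Every nonempty clique is determined by its first vertex $v$ together with a subset of the at most $r$ later neighbours of $v$. Hence $F$ has at most $2^r|V|$ nonempty cliques.

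Combining the steps,
\[
e(\cH)\le r|V|+\mu2^r|V|\le \bigl(r+\mu2^{r+1}\bigr)|V|.
\]
The only delicate points are Step 3, where maximality of $\cM$ is used, and confirming that the Berge path in Step 2 respects the multiplicity rule; I expect neither to cause real difficulty.
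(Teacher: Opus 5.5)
Your proof is correct, but it takes a different route from the paper's.

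\textbf{The paper's route.} The paper argues by induction on $|V|$. It takes a longest Berge path $v_0,e_1,\dots,e_q,v_q$, so $q\le r$. Every hyperedge copy through $v_0$ that the path does not use must lie inside $\{v_0,\dots,v_q\}$, since otherwise the path extends. Hence at most $q+\mu2^{q+1}\le r+\mu2^{r+1}$ copies contain $v_0$. The paper then deletes $v_0$ together with these copies and applies the induction hypothesis.

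\textbf{Your route.} You use a Berge-to-graph reduction instead.
\begin{itemize}
\item Matching copies to distinct pairs produces a $P_{r+2}$-free graph $F$ that carries all matched copies.
\item Maximality forces every unmatched copy to be a clique of $F$.
\item The $r$-degeneracy of $F$ bounds the number of cliques by $2^r|V|$.
\end{itemize}
Each step checks out. Distinctness of the matched copies is exactly what makes the Berge path in Step~2 respect the multiplicity rule. Singleton edges are handled correctly as trivial cliques. In Step~3 only maximality of $\cM$ is used, not maximum size.

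\textbf{What each approach buys.} The paper's argument is shorter and fully self-contained. It needs one extremal-path observation, and neither Theorem~\ref{lem:EG} nor a degeneracy ordering. Your argument separates the two sources of hyperedges cleanly: copies that can be ``represented'' by distinct pairs, and copies that are forced to sit on cliques. It also gives a slightly better constant, $\bigl(\tfrac r2+\mu2^r\bigr)|V|$, if you use the full strength of Theorem~\ref{lem:EG} rather than the weaker $e(F)\le r|V|$. Since Theorem~\ref{lem:weighted} only needs some linear bound in $|V|$, either proof suffices.
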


\begin{proof} We prove the lemma by induction on $|V(H)|$. The case of $|V(H)|=1$ is trivial. For the induction step,
assume  $P=v_0,e_1,v_1,e_2,\dots,e_q,v_q$ is a longest Berge path in $\cH$.
  Each hyperedge copy containing $v_0$ that is not used by $P$ must have all its vertices in $\{v_0,\dots,v_q\}$. Otherwise, we can get a  Berge path of length $q+1$ by adding this hyperedge copy.  Note that there are at most $2^{q+1}$  subsets of $\{v_0,\dots,v_q\}$ and the  multiplicity  of $\cH$ is at most $\mu$. Thus the number of  edges containing $v_0$
is at most $q+\mu 2^{q+1} \leq r+ \mu 2^{r+1}$. Remove the vertex $v_0$ together with all edges containing $v_0$.
Let $\cH'$ be the resulting multi-hypergraph. Note that $\cH'$ is a multi-hypergraph with $|V(\cH)|-1$ vertices and also does not contain a Berge path of length $r+1$. Additionally, the multiplicity of $\cH'$ is at most $\mu$. By induction, $e(\cH') \leq \bigl(r+\mu2^{r+1}\bigr)|V(\cH')|=\bigl(r+\mu2^{r+1}\bigr)(|V(\cH)|-1).$ Together with the upper bound for the number of edges containing $v_0$, we get $e(\cH) \leq  \bigl(r+\mu2^{r+1}\bigr)|V(\cH)|$.
\end{proof}

 Let $Z$ be a graph with a partition $V(Z)=L \cup R$.
For an integer $m\ge1$, the \emph{$m$-fold bouquet}
$\mathcal B_m(Z,L,R)$ is a graph with vertex set $\cup_{i=1}^m L_i \cup R$ such that the subgraph induced by $L_i \cup R$ is isomorphic to $Z$ for each $1 \leq i \leq m$. The next lemma is a key ingredient in the proof of Theorem \ref{lem:weighted}.
\begin{lemma}\label{lem:bouquet}
Let $Z$ be a graph with a partition $V(Z)=L \cup R$. Assume that $Z[L]$ is connected and $R$ is an independent set.  In addition,   $d_L(v) \geq 1$ for each $v \in R$.   If  $d_L(v)\le r$ for each $v \in L$  and
$d_R(v)\le r$ for every $v \in L$  and
\begin{equation}\label{eq:positive-excess}
  e(L,R)+2e(L)>r|L|,
\end{equation}
then $\mathcal B_{s+1}(Z,L,R)$ contains a path with at least $2r+2$
edges, where $s=|R|$. In particular, it contains a copy of $P_k$.
\end{lemma}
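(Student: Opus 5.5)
The plan is to glue together paths coming from different copies of $Z$ in the bouquet, using the vertices of $R$ as connectors. Each $x\in R$ is shared by all copies $L_1,\dots,L_{s+1}$. So a path that enters $L_i$ from $x$ can leave $L_i$ through another vertex of $R$, continue in a fresh copy $L_{i+1}$, and so on. With $s+1$ copies there is always a fresh copy of $L$ available for each of the at most $s$ connectors in $R$, plus one extra copy for a final segment. The problem therefore reduces to a statement about $Z$ alone. Namely, an auxiliary graph $Q$, obtained from $Z[L]$ by adding a single root $w$ that represents ``some vertex of $R$'', should have a long rooted path. A path in $Q$ from $w$ lifts to a path in one copy $L_i$ hanging from an actual vertex of $R$, and two such rooted paths in different copies, joined at a common $R$-vertex, give a path in $\mathcal B_{s+1}(Z,L,R)$.

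The first concrete step is the single-root case. Fix $x\in R$ and apply Lemma~\ref{lem:rooted-EG} to the connected graph $Z[L\cup\{x\}]$ rooted at $x$. This gives
\[
  \ell(x)\ge \frac{2e(L)+d_L(x)}{|L|}.
\]
If $\ell(x)\ge r+1$ for some $x$, take two copies $L_1,L_2$ and glue the two rooted paths at $x$. This produces a path with at least $2r+2$ edges, and we are done. Otherwise $2e(L)+d_L(x)\le r|L|$ for every $x\in R$. Summing over $R$ only yields $s\cdot 2e(L)+e(L,R)\le sr|L|$, which contradicts \eqref{eq:positive-excess} only when $s=1$. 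So the single-root case does not suffice, and the remaining case is exactly where several connectors must be chained.

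For the general case I would run the proof of Lemma~\ref{lem:rooted-EG} again, but in the bouquet setting. Let $Q$ be obtained from $Z[L]$ by adding a root $w$ joined to every $u\in L$ with multiplicity $d_R(u)\le r$, so $d_Q(w)=e(L,R)$. I would prove the analogue
\[
  2e(Q)-d_Q(w)=2e(L)+e(L,R)\le \ell\cdot |L|,
\]
where $\ell$ is the length of the longest path in the bouquet that starts at an $R$-vertex and uses $R$-vertices only as connectors between distinct copies. Together with \eqref{eq:positive-excess} this forces $\ell\ge r+1$, and two such half-paths glued at an $R$-vertex give the required $2r+2$ edges. The induction follows the two cases of Lemma~\ref{lem:rooted-EG}: a cut-vertex split handled by weighted averaging, and an averaging choice of a first neighbour $u$ of small degree. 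In the second case, when the path runs into an $L$-vertex with an $R$-neighbour $y$ not yet used, it continues through $y$ into a fresh copy. This continuation is where the multiplicities are absorbed, and the hypotheses $d_R(v)\le r$ and $d_L(v)\le r$ replace the bound ``degree $\le h-1$'' used in that proof.

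The main obstacle is this modified rooted Erd\H{o}s--Gallai step. In Case~2 of Lemma~\ref{lem:rooted-EG} the bound $a\le d_G(v)(h-1)$ used simplicity, and in $Q$ it fails because of the multiplicities. I would first try to replace it by $a\le r\,d_Q(w)$, using $d_L(v)\le r$, and check that the resulting analogue of $b$ still satisfies \eqref{eq:B-min-bound} in the range $\ell\le r$. If that works, the final sentence of the statement follows because $2r+2\ge k-1$, so a path with at least $2r+2$ edges contains a copy of $P_k$.
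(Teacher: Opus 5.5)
Your treatment of $s=1$ is correct, and more direct than the paper's. Lemma~\ref{lem:rooted-EG} on $Z[L\cup\{x\}]$ gives $\ell(x)\ge (2e(L)+d_L(x))/|L|>r$, and two copies glued at $x$ give $2r+2$ edges.

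For $s\ge 2$, however, everything rests on the bouquet inequality $2e(L)+e(L,R)\le \ell|L|$. You do not prove it, and the route you sketch does not produce it. In Lemma~\ref{lem:rooted-EG} the induction hypothesis is applied to $G-v$, which is again an instance of the same statement. Here, after the step $w\to u$, you are inside $Z[L]$ with the extra option of leaving through unused vertices of $R$ into fresh copies. The root is then no longer an $R$-vertex, the available connectors and copies have changed, and you state no induction hypothesis covering this situation.

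The replacement $a\le r\,d_Q(w)$ only helps choose $u$ and then continue inside $Z[L]$, and that alone cannot suffice. Take $r=s=2$, $L=K_2$, and both vertices of $R$ adjacent to both vertices of $L$. All hypotheses hold and $(2e(L)+e(L,R))/|L|=3$, yet every path from $w$ in $Q$ has at most $2$ edges. So the whole difficulty lies in the connector continuation, which is left unanalysed.

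Second, even granting $\ell\ge r+1$, gluing two half-paths at a common $R$-vertex need not give a path. The vertices of $R$ are shared by all copies, so two copies of a longest half-path that passes through a connector $y$ both contain $y$. You would need two half-paths from the same root with disjoint connector sets, and nothing in the definition of $\ell$ provides this. When $\ell\ge 2r+2$ one half-path suffices, but for $r+1\le \ell\le 2r+1$ this is exactly the issue.

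The paper avoids rooting at $R$ altogether:
\begin{itemize}
\item If $s\ge r+1$, it threads through all of $R$, using a fresh copy between consecutive $R$-vertices.
\item If $2\le s\le r$, it applies Lemma~\ref{lem:rooted-EG} to $Z[L]$ only. With $\eta(v)=r-d_L(v)$ and $D=r|L|-2e(L)$, this yields $\ell_L(v)\ge r-s+1$ whenever $\eta(v)>D-s(|L|-1)$.
\item Claim~\ref{clm:long-attachments}, a counting argument based on $e(L,R)>D$, then supplies two distinct $u_1,u_s\in R$ adjacent to such long vertices.
\item The path is a long path in $L_1$, then a chain through all of $R$ via $L_2,\dots,L_s$, then a long path in $L_{s+1}$. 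Its length is at least $2(r-s+2)+2(s-1)=2r+2$.
\end{itemize}
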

%We postpone the proof of this lemma until the end of this section and proceed to prove Theorem \ref{lem:weighted}.

\begin{proof} For each $v \in L$, we define the deficit of $v$ as  $\eta(v):=r-d_L(v)$
 and put $D:=\sum_{v\in L}\eta(v)=r|L|-2e(L).$
Condition \eqref{eq:positive-excess} is therefore equivalent to $e(L,R)>D.$

We first show $|L| \geq 2$. If  $L$ contains only one vertex, then $e(L)=0$ and   $e(L,R)+2e(L) \leq r|L|$ as
 $e(L,R)\le r|L|$ by the assumption. This is    a contradiction to the assumption.

Recall  $L_i$ is connected and  $d_{L_i}(v) \geq 1$ for each  $v \in R$ and $1 \leq i \leq s+1$. Therefore, for any distinct $u,v \in R$ and any $L_i$ for $1 \leq i \leq s+1$, there is a $uv$-path with length at least two and all internal vertices in $L_i$. We have two cases.

If $s\ge r+1$, then we write  $R=\{v_1,\dots,v_s\}$. For each
$i\in\{2,\dots,s\}$, let $P_i$ be a $v_{i-1}v_{i}$-path with internal vertices in $L_i$. Let $P_1=v_1u_1$ and $P_{s+1}=v_su_{s+1}$, where $u_1 \in N_{L_1}(v_1)$ and   $u_{s+1} \in N_{L_{s+1}}(v_s)$. Putting  all paths together, we get a path with length at least $2s \ge 2r+2 \geq k-1$, as desired.

It remains to consider the case where $1 \leq s \leq r$. For  $v \in L$, let $\ell_L(v)$ be the length of a longest path in $L$ which  begins with $v$. Applying Lemma
\ref{lem:rooted-EG}, we get
\begin{align*}
  \ell_L(v)(|L|-1) &\ge 2e(L)-d_L(v)\\
  &=r|L|-D-\bigl(r-\eta(v)\bigr)\\
  &=r(|L|-1)-D+\eta(v).
  \label{eq:rooted-deficit}
\end{align*}
Call $v \in L$ \emph{long} if  $\eta(v)>D-s(|L|-1)$.
For every long  vertex $v \in L$, we have
\[
  \ell_L(v)(|L|-1)
  > (r-s)(|L|-1).
\]
As $|L| \geq 2$ and $\ell_L(v)$ is an integer, it follows  that $\ell_L(v)\ge r-s+1.$

\begin{claim}\label{clm:long-attachments}
If $s=1$, then the unique vertex in $R$ is adjacent to a long vertex in $L$. If $2\le s\le r$, then there are two distinct vertices in $R$ such that each of them is adjacent to a long vertex in $L$.
\end{claim}

\begin{proof}[Proof of Claim \ref{clm:long-attachments}]
If $D<s(|L|-1)$, then each $v \in L$ is long by the definition and   the claim follows
immediately.

Next we assume $D\ge s(|L|-1).$ Apparently,  $e(L,R)\le s|L|$. Note that the assumption $e(L,R)+2e(L)>r|L|$ implies $D<e(L,R)$. We may write
\[ D=s|L|-t \mbox{ for some }  t\in\{1,\dots,s\}.\]
Let  $M:=s|L|-e(L,R)$  be the number of missing edges between $L$ and $R$. Since
$e(L,R)>D=s|L|-t$, we obtain $$M\le t-1.$$ Using the parametrization of $D$, a vertex $v \in L$ is long if   $\eta(v)\ge s-t+1.$

We assert that there is at least one long vertex. Otherwise,
$\eta(v)\le s-t$ for every $v \in L$, which implies that  $D=\sum_v \eta(v)\le |L|(s-t).$ This is a contradiction to the assumption $D= s|L|-t$, as $t\ge 1$ and $|L|\ge 2$.
Note that there are at most $M$ edges missing between $L$ and $R$. Thus  each long vertex in $L$ has at least
$ s-M=|R|-M \ge s-t+1$ neighbors in $R$.

 If $s=1$, then  $t=1$ as $t\in\{1,\dots,s\}$ and any long vertex is adjacent to the
unique vertex in $R$. This proves the first part of the claim.

Now assume that $2\le s\le r$. If $t\le s-1$, then
$s-t+1\ge2$.  Each long vertex is adjacent to at least two distinct
vertices in $R$ and the second part of the claim follows.

It remains to prove the case of $t=s$. In this case,  a vertex  $v \in L$ is long if
$$\eta(v)>D-s(|L|-1)=s|L|-t-s(|L|-1)=0,$$
i.e., $\eta(v) \geq 1$. Let $L' \subseteq L$ be the set of long vertices. If $|N_R(L')| \geq 2$, then we are done.
Suppose $|N_R(L')|=1$.
We next show that $s\le 1$, contradicting the assumption $s\ge 2$. This will prove the second part of the claim.
  As we assume $|N_R(L')|=1$, each long vertex contributes  at least $s-1$ missing edges between $L$ and $R$.  Additionally, the number of missing edges is at most $M\le t-1=s-1$. Thus there is exactly one long vertex, say $v$, and
$M=s-1$. As  a vertex is long if $\eta(v)=r-d_L(v) \ge 1$ in this case, each vertex $u \in L$ with $u \neq v$ satisfies $d_L(u)=r \leq |L|-1$. Notice that $\eta(v)=D=s|L|-t=s(|L|-1)$ as we assume $v$ is the unique vertex with $\eta(v)>0$ and $s=t$. It is clear that $\eta(v) \leq r \leq |L|-1$.  We  obtain
\[
  s(|L|-1)\le r\le |L|-1,
\]
which forces $s\le1$, a contradiction. The claim is proved.
\end{proof}

We continue to show the existence of a $P_k$ for the case $1 \leq s \leq r$. For $s=1$, let $u$ be the unique vertex in $R$. By Claim \ref{clm:long-attachments},  $u$ is adjacent to a long  vertex in $L$, say $v$. Let $P$ be a longest path in $L$ starting with $v$. Note that $P$ has at least $r$ edges. Let $P_1$ and $P_2$ be copies of $P$ in $L_1$ and $L_2$. By the definition of   $\mathcal B_{s+1}(Z,L,R)$, it contains a path with length at least $2r+2 \geq k-1$, as desired.

For $s\ge 2$, write $R=\{u_1,\ldots,u_s\}$. By Claim \ref{clm:long-attachments}, after relabeling the vertices of $R$, we may assume that $u_1$ and $u_s$ are adjacent to long vertices $v_1$ and $v_{s+1}$ in $L$, respectively. These two long vertices need not be distinct. Let $P_1$ be a longest path in $L_1$ starting at the copy of $v_1$, and let $P_{s+1}$ be a longest path in $L_{s+1}$ starting at the copy of $v_{s+1}$. Each of these paths has length at least $r-s+1$. For each $2\le i\le s $, choose a $u_{i-1}u_i$-path $P_i$ with all internal vertices in $L_i$. Concatenating these paths with the attachment edges at $u_1$ and $u_s$  gives a path of length at least
$$ 2(r-s+2)+2(s-1)=2r+2. $$
This completes the proof.
\end{proof}
We are now ready to prove Theorem \ref{lem:weighted}.
\begin{proof}[Proof of Theorem \ref{lem:weighted}]
Let $\cC$ be the set of connected components of $G[Y]$. By the assumption, every
$C\in\cC$ has maximum degree at most $r$. Moreover, the diameter of $C$
is at most $k-2$ as $G$ is $P_k$-free. The Moore bound gives  $|C|\le M_k:=1+r+r^2+\cdots+r^{k-2}.$
For each $C\in\cC$, set
\[
  n_C:=|C|,\qquad e_C:=e(C),\qquad z_C:=e_G(C,X),
\]
and define the \emph{excess} of $C$ as
\[
  \gamma(C):=z_C+2e_C-rn_C.
\]
Observe that $e_G(X,Y)+2e_G(Y)-r|Y|=\sum_{C\in\cC}\gamma(C).$
Apparently,  $z_C\le rn_C$
and  $2e_C \le r n_C$ as $d_X(y)\le r$ and $d_{Y}(y)\le r$ for every $y\in Y$.
Consequently,
\begin{equation}\label{eq:gamma-bound}
\gamma(C)
  \le z_C
  \le rn_C
  \le rM_k.
\end{equation}

If $C$ is a connected component  with $\gamma(C)>0$,  then $|N_X(C)|>0$. Otherwise, $\gamma(C)=2e_C-rn_C\le0$, a contradiction. Moreover,
$1\le |N_X(C)|\le z_C\le rM_k=:R_k$. Let $\cH$ be a multi-hypergraph with vertex set $X$. For each connected component $C$ with $\gamma(C)>0$, we put $N_X(C)$ in the edge set $\cal E$ of $\cH$.
We claim that $\cH$ contains no Berge path of length $r+1$. Suppose  $v_0, N_X(C_1),v_1,\dots,N_X(C_{r+1}),v_{r+1}$ is a Berge path with length $r+1$.
For each $i\in\{1,\dots,r+1\}$, choose vertices
$c_i^-,c_i^+\in V(C_i)$ satisfying
\[
  v_{i-1}c_i^-\in E(G)
  \qquad\text{and}\qquad
  v_i c_i^+\in E(G).
\]
These vertices exist because $v_{i-1},v_i\in N_X(C_i)$. Since $C_i$ is
connected, it contains a simple $c_i^-c_i^+$-path, where we take the trivial one-vertex path if
$c_i^-=c_i^+$.
Then there is a  $v_{i-1}v_i$-path of length at least two whose internal vertices all lie
in $C_i$ for each $1 \leq i \leq r+1$. Thus we get  a simple path in $G$ of length at least  $2(r+1)=2r+2\ge k-1.$
This is a contradiction as we assume $G$ is $P_k$-free.

It remains to bound the multiplicity of $\cH$. For  $C \in \cC$, we call $G[V(C) \cup N_X(C)]$  the {\it type} of the pair $(C,N_X(C))$. Note that $|N_X(C)| \leq R_k$ as $d_X(y) \leq r$ for each $y \in Y$ and $|C| \leq M_k$ for each $C \in \cC$.
For a connected component $C$ with $m$  vertices, there are at most
$2^{\binom m2}$ choices for  edges in $C$ and
 at most $2^{m|N_X(C)|}\le2^{mR_k}$ choices for edges $E(C,X)$.
  Hence the
number of possible types  is at most   $ Q_k:=\sum_{m=1}^{M_k}2^{\binom m2+mR_k}.$

Fix a pair $(C,N_X(C))$  with $\gamma(C)>0$. We write $U=N_X(C)$ and
$s:=|N_X(C)|$. The number of pairs $(C',N_X(C'))$ such that $(C,N_X(C))$  and $(C',N_X(C'))$ are of the same type is at most $s$. Otherwise, there are  $s+1$ pairs $(C,U), (C_1,U), \ldots, (C_s,U)$ that are of the same type.
Now let $Z=G[C \cup U]$, $L=C$, $R=U$, $L_i=C_i$ for $1 \leq i \leq s$. Then we have the bouquet
$\mathcal B_{s+1}(Z,L,R)$. The assumptions  $d_Y(y)\le r$, $d_X(y)\le r$, and
$\gamma(C)>0$ ensure $\mathcal B_{s+1}(Z,L,R)$ satisfies conditions in Lemma  \ref{lem:bouquet}. By Lemma \ref{lem:bouquet},  $\mathcal B_{s+1}(Z,L,R)$ contains
$P_k$, a contradiction.

There are at most $Q_k$ types of pairs, and each type of pair occurs at most
$s=|N_X(C)|\le R_k$ times. Thus $\cH$ has
multiplicity at most $\mu_k:=R_kQ_k.$
Since $\cH$ has no Berge path of length $r+1$, Lemma \ref{lem:hypergraph}
gives
\begin{equation*}
  \bigl|\{C\in\cC:\gamma(C)>0\}\bigr|
  =e(\cH)
  \le D_k|X|,
  \qquad
  D_k:=r+\mu_k2^{r+1}.
\end{equation*}
Finally,  by using \eqref{eq:gamma-bound}, we obtain
\begin{align*}
  e_G(X,Y)+2e_G(Y)-r|Y|=\sum_{C\in\cC}\gamma(C)\le\sum_{\gamma(C)>0}\gamma(C)\le rM_kD_k|X|.
\end{align*}
Thus \eqref{eq:weighted-conclusion} holds with $C_k:=rM_kD_k.$
\end{proof}

\section{Proof of Theorem \ref{thm:main}}\label{sec:main-proof}
We first recall the following version of the triangle removal lemma.

\begin{lemma}[Ruzsa--Szemer\'edi~\cite{RS78}]\label{lem:triangle-removal}
For a fixed constant $\epsilon>0$, let $\delta=\delta(\epsilon)$ be a constant such that
$1/\delta$ is a tower of twos of height
$684\log\bigl(1/\epsilon\bigr)$. If $G$ is a graph on $n$ vertices with
at least $\epsilon n^2$ edge-disjoint triangles, then $G$ contains at
least $\delta n^3$ triangles.
\end{lemma}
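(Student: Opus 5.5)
The plan is to prove the contrapositive and follow the entropy-increment strategy of Fox, which gives logarithmic tower height. Assume $G$ has at least $\epsilon n^2$ edge-disjoint triangles but fewer than $\delta n^3$ triangles. Any triangle-free subgraph must lose at least one edge from each of the edge-disjoint triangles, so it suffices to show: \emph{if $t(G)<\delta n^3$, then one can delete fewer than $\epsilon n^2$ edges to make $G$ triangle-free.} I would obtain the deletion set from a vertex partition $\mathcal P=\{V_1,\dots,V_m\}$. We delete all edges inside parts, all edges between pairs of density below about $\epsilon/4$, and all edges of a bounded set of ``bad'' pairs. We then show that any surviving triangle forces many triangles.

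The partition is built iteratively. Instead of the energy $\sum |V_i||V_j|d(V_i,V_j)^2/n^2$, I would use Fox's \emph{mean entropy density} $\sum_{i,j}\frac{|V_i||V_j|}{n^2}f(d(V_i,V_j))$ with $f(x)=x\log x$ (suitably normalised). This quantity is convex, so it is nondecreasing under refinement. Restricted to the relevant densities in $[\epsilon/4,1]$, it varies over a range of only $O(\log(1/\epsilon))$ in units of the increment we will gain.

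The key step is a dichotomy lemma. Suppose the current partition has $m$ parts and the surviving graph $G'$ (after the deletions above) still contains a triangle. Then either $G$ has at least $\delta_m n^3$ triangles, where $\delta_m$ is at least $2^{-m^{O(1)}}$ times a power of $\epsilon$, or there is a refinement into at most $m2^{O(m)}$ parts whose entropy index is larger by an absolute amount $c$, independent of $\epsilon$ up to the normalisation. For the proof, take a surviving triangle across parts $V_a,V_b,V_c$. If each pair among these three is sufficiently ``triangle-regular'', meaning the relevant common-neighbourhood counts concentrate, then a Cauchy--Schwarz counting argument gives at least roughly $d_{ab}d_{bc}d_{ca}|V_a||V_b||V_c|\ge(\epsilon/4)^3\prod|V_\cdot|$ triangles. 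Averaging over parts then yields the first alternative, since parts have size about $n/m$. If instead some relevant pair fails, its witness sets let us refine. Because densities are at least $\epsilon/4$, the defect is a multiplicative rather than additive density deviation, and the strict convexity of $x\log x$ turns a constant-factor deviation into a constant increment of entropy.

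Iterating from the trivial partition, the process stops after $O(\log(1/\epsilon))$ steps, because the entropy index is bounded and increases by $c$ each time. Each step replaces $m$ by at most $2^{O(m)}$, so the final number of parts is a tower of height $O(\log(1/\epsilon))$. At termination the first alternative holds, or the deletion set has size at most $n^2/m+(\epsilon/4)n^2+(\text{bad pairs})<\epsilon n^2$, which contradicts the assumption of $\epsilon n^2$ edge-disjoint triangles. Hence $t(G)\ge\delta n^3$, with $1/\delta$ a tower of height $C\log(1/\epsilon)$. Explicit bookkeeping of the constants in the increment $c$ and in the exponent $O(m)$ should give $C=684$, with $\log$ base $2$.

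I expect the main obstacle to be the dichotomy lemma: defining the right weak notion of regularity so that failure yields an entropy increment of absolute size. An energy increment would only be $\Omega(\epsilon^{O(1)})$, and that would give height polynomial in $1/\epsilon$. I would try first to copy Fox's formulation directly, with regularity measured relative to the density $d$ and deviation a constant fraction of $d$.
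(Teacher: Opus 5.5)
The paper does not prove this lemma; it is quoted from the literature. The $684\log(1/\epsilon)$ tower height is Fox's bound rather than Ruzsa--Szemer\'edi's, and Lemma~\ref{lem:hitting} only uses that $\delta(\epsilon)>0$ is independent of $n$, for which the classical regularity proof suffices. As a proof of the quantitative statement, your sketch has a concrete gap in the count of rounds.

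The entropy gained by refining a pair $(V_a,V_b)$ is proportional to its edge mass. It is roughly $\frac{|V_a||V_b|}{n^2}d_{ab}$ times the fraction of its edges pushed into subpairs of at least double density. So refining the one failing pair found from one surviving triangle gains only $O(1/m^2)$ when parts have size about $n/m$. The dichotomy has to be global: either the bad pairs carry fewer edges than a fixed fraction of the deletion budget (and you delete them), or you refine all of them at once.

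Even then, you only know the bad pairs carry about $\epsilon n^2$ edges, so a round gains $\Omega(\epsilon)$. For dense $G$, your potential lies in $[\bar d\log\bar d,0]$ with $\bar d=2e(G)/n^2=\Theta(1)$. The a priori bound is therefore $O(1/\epsilon)$ rounds, not $O(\log(1/\epsilon))$. Your claim that the range is $O(\log(1/\epsilon))$ ``in units of the increment'' holds only if the bad edges are a constant fraction of \emph{all} edges.

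The remedy is the hypothesis you discard in your first paragraph. Run the iteration on the union $G_0$ of the $\epsilon n^2$ edge-disjoint triangles. Then $e(G_0)=3\epsilon n^2$ and $t(G_0)\le t(G)$. Any triangle-free subgraph of $G_0$ misses a third of its edges, so the bad edges are a constant fraction of $E(G_0)$. A gain of $\Omega(\epsilon)$ per round against a range of $O(\epsilon\log(1/\epsilon))$ then gives $O(\log(1/\epsilon))$ rounds.

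The second gap is the dichotomy lemma itself, which you leave unproved and which carries the whole theorem. To gain $\Omega(e(G_0)/n^2)$ per round, failure of your regularity notion must yield witnesses that move a constant fraction of the bad pairs' edges into subpairs of at least twice the density. This is what entropy buys over energy: the gain is controlled by the fraction of edges captured, not by the area of the witness.

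A Cauchy--Schwarz triangle count at densities of order $\epsilon$, however, detects failure inside rectangles such as $N_{V_b}(x)\times N_{V_c}(x)$. Their sides have relative size comparable to the densities. Refining by such witnesses moves only an $\epsilon$-dependent fraction of the edge mass, so each round gains only an $\epsilon$-dependent fraction of $e(G_0)/n^2$, and the logarithmic height is lost. You never define ``triangle-regular'', prove the counting bound from it, or show how its failure produces edge-heavy witnesses. That is the crux of the logarithmic bound, not a routine adaptation of Fox's formulation.

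Two smaller points. After refining by witness sets the parts are no longer of size about $n/m$, so you must also discard edges at small parts before the averaging step. And the constant $684$ cannot come out of ``bookkeeping'' in an argument whose constants were never fixed.
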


The following lemma is an immediate consequence.

\begin{lemma}\label{lem:hitting}
Let $G$ be an $n$-vertex graph satisfying $t(G)=o(n^3)$. Then there is a
set $R_0\subseteq E(G)$ with $|R_0|=o(n^2)$ such that $G-R_0$ is
triangle-free.
\end{lemma}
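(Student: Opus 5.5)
We derive this as a consequence of the triangle removal lemma (Lemma \ref{lem:triangle-removal}), via a maximal family of edge-disjoint triangles. The plan is to first extract a quantitative contradiction: if no small hitting set existed, there would be many edge-disjoint triangles, and then many triangles overall.

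Fix $\epsilon>0$. Let $\mathcal T$ be a maximal collection of pairwise edge-disjoint triangles in $G$, and let $R_0$ be the set of all edges of triangles in $\mathcal T$, so $|R_0|=3|\mathcal T|$. By maximality, every triangle of $G$ shares an edge with some member of $\mathcal T$, hence meets $R_0$. Therefore $G-R_0$ is triangle-free. It remains to show $|\mathcal T|=o(n^2)$.

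Suppose instead that $|\mathcal T|\ge\epsilon n^2$ for some fixed $\epsilon>0$ and infinitely many $n$. Then Lemma \ref{lem:triangle-removal} gives $t(G)\ge\delta(\epsilon)n^3$, with $\delta(\epsilon)>0$ independent of $n$. This contradicts $t(G)=o(n^3)$ once $n$ is large.

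Formally, since $t(G)=o(n^3)$, for each $\epsilon$ there is an $n_0(\epsilon)$ such that $t(G)<\delta(\epsilon)n^3$ whenever $n\ge n_0(\epsilon)$. For such $n$ we get $|\mathcal T|<\epsilon n^2$, and hence $|R_0|<3\epsilon n^2$. Since $\epsilon$ was arbitrary, $|R_0|=o(n^2)$. To obtain a single function of $n$, let $\epsilon\to0$ slowly with $n$, using the usual diagonal argument.

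The only mildly delicate point is making the $o(\cdot)$ statement uniform in this way. The combinatorial content is just the maximality argument together with the removal lemma.
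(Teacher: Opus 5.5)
Your proof is correct and follows the paper's argument: take a maximal family of pairwise edge-disjoint triangles, let $R_0$ be the union of their edges, use maximality to see that $G-R_0$ is triangle-free, and use the removal lemma to get $|R_0|=3|\mathcal T|=o(n^2)$. The closing diagonal argument is unnecessary: $R_0$ is chosen independently of $\epsilon$, so the bound $|R_0|<3\epsilon n^2$ for all $n\ge n_0(\epsilon)$ is already exactly the statement $|R_0|=o(n^2)$.
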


\begin{proof}
Let $\cM$ be a maximal collection of pairwise edge-disjoint triangles in $G$. We first show that $|\cM|=o(n^2)$. Otherwise, there is a constant $\epsilon>0$ such that $|\cM|\ge\epsilon n^2$. Thus, the graph $G$ contains at least $\epsilon n^2$ edge-disjoint triangles. By Lemma \ref{lem:triangle-removal}, it  contains at least $\delta(\epsilon)n^3$ triangles. Since $\delta(\epsilon)>0$ is independent of $n$, this contradicts $t(G)=o(n^3)$. Therefore $|\cM|=o(n^2)$. Let
\[
  R_0:=\bigcup_{T\in\cM}E(T).
\]
The triangles in $\cM$ are pairwise edge-disjoint, so $|R_0|=3|\cM|=o(n^2)$. If $G-R_0$ contained a triangle $T$, then $T$ would be edge-disjoint from every triangle in $\cM$, and $\cM\cup\{T\}$ would be a larger collection of pairwise edge-disjoint triangles. This  contradicts the maximality of $\cM$. Hence $G-R_0$ is triangle-free and the lemma is proved.
\end{proof}

As  the lower bound construction can be found in  Mubayi and Mukherjee's paper \cite{MM23}, we omit  details here. It remains to prove the upper bound.

 For fixed $k\ge4$, let $G$ be an $n$-vertex $\widehat P_k$-free graph for sufficiently large $n$.
 Recall  $t(G)$ is the number of triangles in $G$ and $r=\lfloor \tfrac{k-2}{2}\rfloor$.
 We aim to prove $t(G)\le rn^2/8+o(n^2)$.

 \begin{claim}
 $t(G)=o(n^3)$.
 \end{claim}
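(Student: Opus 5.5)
The approach is to combine the triangle identity from the introduction with the Erd\H{o}s--Gallai theorem (Theorem \ref{lem:EG}). Since $G$ is $\widehat P_k$-free, for every vertex $v$ the neighborhood graph $G[N_G(v)]$ is $P_k$-free. Theorem \ref{lem:EG} then gives
\[
  e\bigl(G[N_G(v)]\bigr)\le \frac{k-2}{2}\,d_G(v)\le \frac{k-2}{2}\,n .
\]

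Summing this bound over all $v\in V(G)$ and using $t(G)=\frac13\sum_{v} e(G[N_G(v)])$, we get
\[
  t(G)\le \frac13\sum_{v\in V(G)}\frac{k-2}{2}\,d_G(v)=\frac{k-2}{3}\,e(G)\le \frac{k-2}{3}\binom n2 .
\]
Since $k$ is fixed, this is $O(n^2)$, and in particular $o(n^3)$.

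There is no real obstacle here. The only point to check is that suspension-freeness really does transfer to each neighborhood: a copy of $P_k$ inside $G[N_G(v)]$, together with $v$, would form a copy of $\widehat P_k$. The claim is recorded only so that Lemma \ref{lem:hitting} can be applied next, giving a set $R_0$ of $o(n^2)$ edges whose removal leaves $G$ triangle-free.
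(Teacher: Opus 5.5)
Your proposal is correct and follows the paper's own argument: apply Erd\H{o}s--Gallai to each $P_k$-free neighborhood graph, sum via $3t(G)=\sum_v e(G[N_G(v)])$ to get $t(G)\le\frac{k-2}{3}e(G)\le\frac{k-2}{3}\binom n2=O(n^2)$. The intermediate bound $\frac{k-2}{2}d_G(v)\le\frac{k-2}{2}n$ is never used.
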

\begin{proof}
For every $v\in V(G)$, the subgraph $G[N_G(v)]$ is $P_k$-free as $G$ is $\widehat P_k$-free. Theorem \ref{lem:EG} implies
$ e(G[N_G(v)])\le\tfrac{k-2}{2}d_G(v).$ Summing over $v$,
we obtain
\begin{equation*}
  3t(G)
  =\sum_{v\in V(G)}e(G[N_G(v)])
  \le\frac{k-2}{2}\sum_{v\in V(G)}d_G(v)
  =(k-2)e(G).
\end{equation*}
It follows that
\[
  t(G)\le\frac{k-2}{3}\binom n2=O_k(n^2)=o(n^3).
\]
\end{proof}

By Lemma \ref{lem:hitting}, there is a set $R_0\subseteq E(G)$ with
$|R_0|=o(n^2)$ such that $G-R_0$ is triangle-free. If we put
\[
  \rho_n:=\frac{|R_0|}{n^2},
\]
then $\rho_n\to0$.  If $\rho_n=0$, then we take $\omega_n:=n^{1/4}$. When $\rho_n>0$, we set $\omega_n:=\min\{n^{1/4},\rho_n^{-1/2}\}$. Note that in each case,
\begin{equation}\label{eq:omega-properties}
  \omega_n \to\infty
  \qquad\text{and}\qquad
  \omega_n \rho_n\to 0  \qquad\text{as}\qquad n \to \infty.
\end{equation}
Let
\[
  R_0':=\{uv\in E(G)\setminus R_0:\codeg_G(uv)>\omega_n\} \qquad\text{and}\qquad R=R_0 \cup R_0'.
\]
\begin{claim}
$|R|=o(n^2)$.
\end{claim}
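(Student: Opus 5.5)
Since $|R_0|=\rho_n n^2=o(n^2)$, it suffices to show $|R_0'|=o(n^2)$. The plan is to double count triangles through edges of large codegree.

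Every edge $uv$ of $G$ with $\codeg_G(uv)=c$ lies in exactly $c$ triangles. Summing codegrees over all edges counts each triangle three times, so
\[
  \sum_{uv\in E(G)}\codeg_G(uv)=3t(G).
\]
Every edge of $R_0'$ has codegree greater than $\omega_n$. Therefore
\[
  \omega_n|R_0'|<\sum_{uv\in R_0'}\codeg_G(uv)\le 3t(G).
\]

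To bound $t(G)$, we use the bound established in the previous claim:
\[
  t(G)\le\frac{k-2}{3}\binom n2\le\frac{k-2}{6}\,n^2.
\]
Combining the two displays gives
\[
  |R_0'|\le \frac{3t(G)}{\omega_n}\le \frac{(k-2)n^2}{2\omega_n}.
\]
Since $\omega_n\to\infty$ by \eqref{eq:omega-properties}, this is $o(n^2)$.

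Finally, $|R|\le|R_0|+|R_0'|=\rho_n n^2+o(n^2)=o(n^2)$, because $\rho_n\to0$.

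The argument has no real obstacle. The only point to check is that the $O(n^2)$ bound on $t(G)$ from the Erdős–Gallai argument is used here, not merely $t(G)=o(n^3)$. This matters because $\omega_n\le n^{1/4}$ grows slowly, so a bound of the form $o(n^3)/\omega_n$ would not be $o(n^2)$.
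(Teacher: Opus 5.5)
Your proof is correct and follows the paper's argument exactly: bound $\omega_n|R_0'|$ by $\sum_{uv\in R_0'}\codeg_G(uv)\le 3t(G)=O_k(n^2)$ using the Erd\H{o}s--Gallai estimate, then use $\omega_n\to\infty$. One small slip: the strict inequality $\omega_n|R_0'|<\sum_{uv\in R_0'}\codeg_G(uv)$ fails when $R_0'=\emptyset$, so write $\le$ there.
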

\begin{proof} Recall that $|R_0|=o(n^2)$.
Observe   $ \omega_n |R_0'| \le \sum_{uv \in R_0'}\codeg_G(uv) \le3t(G)=O_k(n^2)$.
As $\omega_n \to\infty$, we get $|R_0'|=o(n^2)$ and then $|R|=o(n^2)$.
\end{proof}
Let $T(G)$ be the set of triangles in $G$. We define
\[
T_1(G)=\{T \in T(G): |E(T) \cap R| \geq 2\} \qquad\text{and}\qquad T_2(G)=T(G)\setminus T_1(G).
\]
\begin{claim}\label{t1g}
$|T_1(G)|=o(n^2)$.
\end{claim}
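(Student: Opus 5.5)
We need to show that the number of triangles with at least two edges in $R$ is $o(n^2)$. The plan is to charge each such triangle to a pair of its edges lying in $R$, and to split $R$ into its two parts $R_0$ and $R_0'$. Every $T\in T_1(G)$ contains two distinct edges $e,f\in R$ sharing a vertex. This gives three kinds of triangle: those with at least two edges in $R_0$, those with at least two edges in $R_0'$, and those with exactly one edge in $R_0$ and one edge in $R_0'$.

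\emph{Triangles charged to an edge of $R_0$.} If $T$ has an edge $e\in R_0$, charge $T$ to $e$. We do not control the codegree of edges of $R_0$, so this step needs a separate argument. Note first that $G-R_0$ is triangle-free, so every triangle has an edge in $R_0$. We cannot simply bound triangles on $R_0$ edges by $|R_0|\cdot n$, since $|R_0|n$ may not be $o(n^2)$. Instead we use that $G[N(v)]$ is $P_k$-free for every vertex $v$. Fix a triangle $T=uvw$ with $uv\in R_0$ and a second edge in $R$ at $u$, say. Then $T$ corresponds to an edge $vw$ in $G[N(u)]$ with both $uv,uw\in R$.

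Let $d_R(u)$ be the degree of $u$ in the graph $(V,R)$. By Theorem~\ref{lem:EG} applied to $G[N_R(u)]$, which is $P_k$-free, the number of edges inside $N_R(u)$ is at most $\frac{k-2}{2}d_R(u)$. Summing over $u$ bounds the number of pairs (triangle, vertex whose two incident edges lie in $R$) by
\[
  \sum_u \frac{k-2}{2}\,d_R(u)=(k-2)|R|=o(n^2).
\]
This handles every $T\in T_1(G)$, since two edges of $T$ in $R$ share a vertex $u$, and both other vertices then lie in $N_R(u)$ and are adjacent.

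So the whole claim follows directly from Theorem~\ref{lem:EG} and $|R|=o(n^2)$, and the case split above is unnecessary. Concretely, $|T_1(G)|\le \sum_u e(G[N_R(u)])\le (k-2)|R|=o(n^2)$, where $N_R(u)=\{w:uw\in R\}\subseteq N_G(u)$, and $G[N_R(u)]$ is $P_k$-free as a subgraph of $G[N_G(u)]$.

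The only point to check is the injectivity of the charging. Each $T\in T_1(G)$ has a vertex $u$ at which both incident edges of $T$ lie in $R$, and then the opposite edge of $T$ is an edge of $G[N_R(u)]$. Distinct triangles yield distinct pairs $(u,\text{edge})$, so the sum counts each triangle at least once. I see no real obstacle; the main thing to avoid is trying to bound via codegrees, which fails for edges of $R_0$.
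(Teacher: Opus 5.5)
Your final argument is correct and is exactly the paper's proof. You apply Theorem~\ref{lem:EG} to the $P_k$-free graph $G[N_R(u)]$ at each vertex $u$, observe that every triangle of $T_1(G)$ is counted at a vertex whose two incident triangle edges lie in $R$, and sum to get $|T_1(G)|\le (k-2)|R|=o(n^2)$; as you noted, the preliminary split into $R_0$ and $R_0'$ edges is unnecessary and can be dropped.
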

\begin{proof}
For each $v \in V(G)$, we define $N_R(v)=\{u \in V(G): uv \in R\}$. Notice that the subgraph of $G$ induced by $N_R(v)$ contains at most $\tfrac{k-2}{2}|N_R(v)|$ edges by Theorem \ref{lem:EG}.

Thus, for each $v\in V(G)$, there are at most
$\tfrac{k-2}{2}|N_R(v)|$ triangles in which both edges incident with $v$ belong to  $R$. Every triangle in $\mathcal T_1(G)$ is counted for at least one vertex $v$. Summing over all vertices,  we obtain
\[
|T_1(G)| \leq \sum_{v \in V(G)} \frac{k-2}{2}|N_R(v)|=(k-2)|R|=o(n^2).
\]
\end{proof}
We next aim to estimate $|T_2(G)|$. Since $G-R_0$ is triangle-free, each triangle must contain an edge in $R_0$.
 The definition of  $T_2(G)$ gives  the following fact.
\begin{fact}
Each triangle from $T_2(G)$ contains exactly one edge in $R_0$ and no edges in $R_0'$.
\end{fact}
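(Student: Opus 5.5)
The statement to prove is the Fact: every triangle in $T_2(G)$ contains exactly one edge of $R_0$ and no edge of $R_0'$. The plan is a direct argument from three definitions. The first is the triangle-freeness of $G-R_0$. The second is the definition $R=R_0\cup R_0'$ with $R_0'\subseteq E(G)\setminus R_0$, so the union is disjoint. The third is the definition of $T_2(G)$ as the triangles with $|E(T)\cap R|\le 1$.

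First, fix $T\in T_2(G)$. Since $G-R_0$ is triangle-free, not all three edges of $T$ survive in $G-R_0$. Hence $|E(T)\cap R_0|\ge 1$.

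Second, since $T\notin T_1(G)$, we have $|E(T)\cap R|\le 1$. As $R_0\subseteq R$, this gives $|E(T)\cap R_0|\le |E(T)\cap R|\le 1$. Combined with the first step, $|E(T)\cap R_0|=1$, and therefore $|E(T)\cap R|\ge 1$ as well, so $|E(T)\cap R|=1$.

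Third, write $R=R_0\sqcup R_0'$, a disjoint union by the definition of $R_0'$. Then
\[
|E(T)\cap R_0'| = |E(T)\cap R|-|E(T)\cap R_0| = 1-1 = 0.
\]
So $T$ uses no edge of $R_0'$.

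I do not expect any real obstacle. The only point needing care is the disjointness of $R_0$ and $R_0'$, which is built into the definition $R_0'\subseteq E(G)\setminus R_0$. Even without it, the argument still goes through: $E(T)\cap R_0'\subseteq E(T)\cap R$, and that set has exactly one element, the edge lying in $R_0$, so this edge is not in $R_0'$.
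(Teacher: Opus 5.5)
Your proof is correct and follows the paper's reasoning: triangle-freeness of $G-R_0$ forces at least one edge of $R_0$, and membership in $T_2(G)$ (at most one edge of $R=R_0\cup R_0'$) then forces exactly one such edge and none from $R_0'$. Your closing remark is also right: the disjointness of $R_0$ and $R_0'$ is not actually needed for the conclusion.
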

Let $H=E(G) \setminus R$. Note that the subgraph of $G$ spanned by the edge set $H$  is triangle-free and every edge of $H$ lies in at most
$\omega_n$ triangles in $G$. In addition, each triangle of $T_2(G)$ contains exactly two edges in $H$ and one edge in $R_0$.

Recall that $R=R_0\cup R_0'$. Let $c\colon E(G)\setminus R_0'\to\{0,1\}$ be a coloring of $E(G)\setminus R_0'$ such that
\[
  c(e)=
  \begin{cases}
    1,&e\in H,\\
    0,&e\in R_0.
  \end{cases}
\]
Define a graph $\widetilde G$ with vertex set $V(G)\times\{0,1\}$.   For $(x,i), (y,j) \in V(\widetilde G)$,
\[
(x,i) \text{ is adjacent to } (y,j) \text{ if and only if } xy \in E(G)\setminus R_0' \text{ and } i-j \equiv c(xy) \pmod{2}.
\]
Notice that the above definition actually introduces a surjection $f: E(\widetilde G) \to E(G)\setminus R_0' $. For each $xy \in E(G) \setminus R_0'$, we use $\widetilde{xy}$ to denote the pre-image of $xy$ and then $|\widetilde{xy}|=2$.
Let $\widetilde {H}$ and $\widetilde R_0$ be the pre-images of $H$  and
$R_0$, respectively.  Note that $|\widetilde H|=2|H|$ and $|\widetilde R_0|=2|R_0|=o(n^2).$

Let $A:=V(G)\times\{0\}$ and  $B:=V(G)\times\{1\}$. Observe that each edge in $\widetilde H$ is crossing. Meanwhile, each edge in $\widetilde R_0$ is  contained in either $A$ or $B$. Let $\widetilde{T}$ be the set of triangles in $\widetilde{G}$ that consists of two edges from $\widetilde H$ and one edge from $\widetilde R_0$.
\begin{claim}\label{2t2}
$|\widetilde{T}|=2|T_2(G)|.$
\end{claim}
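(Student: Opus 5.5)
I will prove the claim by exhibiting an explicit two-to-one correspondence between $\widetilde T$ and $T_2(G)$ through the projection $\pi\colon (x,i)\mapsto x$.

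\emph{Step 1: $\pi$ maps $\widetilde T$ into $T_2(G)$.} Take a triangle $\{(x,i),(y,j),(z,l)\}\in\widetilde T$. Its three vertices lie in distinct fibres. Indeed, two vertices $(x,0),(x,1)$ are never adjacent, because $xx$ is not an edge of $G$. Each edge of $\widetilde G$ projects under $f$ to an edge of $G$, so $xyz$ is a triangle of $G$. Two of its edges lie in $H$ and one lies in $R_0$; in particular none lies in $R_0'$. Since $H\cap R=\emptyset$, we get $|E(xyz)\cap R|=1$, so $xyz\in T_2(G)$.

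\emph{Step 2: each $T\in T_2(G)$ has exactly two preimages.} By the Fact, $T=xyz$ has exactly one edge in $R_0$, say $xy$, and its other two edges $xz,yz$ lie in $E(G)\setminus R=H$. I count the lifts $\{(x,i),(y,j),(z,l)\}$ that form a triangle in $\widetilde G$. The adjacency conditions are
\[
i-j\equiv 0,\qquad i-l\equiv 1,\qquad j-l\equiv 1 \pmod 2.
\]
This system is consistent: the sum of the colours around the triangle is $0+1+1\equiv 0$. Choosing $i\in\{0,1\}$ determines $j=i$ and $l=1-i$, so there are exactly two lifts. Each lift uses two edges of $\widetilde H$ and one edge of $\widetilde R_0$, hence lies in $\widetilde T$.

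\emph{Step 3: the two lifts are distinct and the fibres partition $\widetilde T$.} The two lifts of $T$ are distinct triangles, namely
\[
\{(x,0),(y,0),(z,1)\}\quad\text{and}\quad\{(x,1),(y,1),(z,0)\}.
\]
Distinct triangles of $G$ have disjoint sets of lifts, because a lift determines its projection. Summing over $T\in T_2(G)$ therefore gives $|\widetilde T|=2|T_2(G)|$.

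The only delicate point is in Step 1: I must make sure that a triangle of $\widetilde T$ cannot use two vertices over the same $x$. This is the fibre observation used there.
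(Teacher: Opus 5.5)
Your proof is correct and follows the same route as the paper: both show that each triangle of $T_2(G)$ has exactly two lifts in $\widetilde T$. The paper fixes one of the two lifts of an $H$-edge and notes that the third vertex is then forced, while you solve the parity system directly. Your Step 1, which shows that every triangle of $\widetilde T$ has its vertices in distinct fibres and projects into $T_2(G)$, makes explicit a reverse inclusion that the paper leaves implicit.
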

\begin{proof}
Suppose $uvw \in T_2(G)$ with $uv,vw \in H$ and $uw \in R_0$. We next show the pre-image of $uvw$ contains two triangles in $\widetilde{T}$. By the definition of edges in $E(\widetilde G)$, we have $\widetilde{uv}=\{(u,0)(v,1)$, $(u,1)(v,0)\}$ and each possibility gives us a unique choice for $(w,i)$ with $i \in \{0,1\}$. The claim is proved.
\end{proof}

\begin{claim}\label{lem:cover-properties}
The graph $\widetilde G$ is $\widehat P_k$-free. Moreover, every edge of
$\widetilde H$ is contained in at most $\omega_n$ triangles in
$\widetilde T$.
\end{claim}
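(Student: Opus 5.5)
The plan is to exploit the fact that the projection $\pi\colon V(\widetilde G)\to V(G)$, $(x,i)\mapsto x$, is a graph homomorphism from $\widetilde G$ into $G$ that is injective on each closed neighbourhood. Both assertions of the claim then follow by pushing structures in $\widetilde G$ down to $G$.

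\emph{Step 1: $\pi$ is a homomorphism.} If $(x,i)(y,j)\in E(\widetilde G)$, then by definition $xy\in E(G)\setminus R_0'$, so $\pi$ maps edges to edges.

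\emph{Step 2: $\pi$ is injective on closed neighbourhoods.} Fix a vertex $(v,i)$. For each neighbour $x$ of $v$ in $G$ with $vx\notin R_0'$, there is exactly one $j\in\{0,1\}$ with $i-j\equiv c(vx)\pmod 2$. Hence $\pi$ restricted to $N_{\widetilde G}((v,i))$ is injective. Moreover $(v,1-i)$ is not adjacent to $(v,i)$, since $G$ has no loops. So $\pi$ is injective on $\{(v,i)\}\cup N_{\widetilde G}((v,i))$.

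\emph{Step 3: $\widetilde G$ is $\widehat P_k$-free.} Suppose $\widetilde G$ contains a copy of $\widehat P_k$ with apex $(v,i)$ and path vertices $(x_1,j_1),\dots,(x_k,j_k)$. All of these vertices lie in the closed neighbourhood of $(v,i)$, so by Step 2 their projections $v,x_1,\dots,x_k$ are distinct. By Step 1 the edges $vx_t$ and $x_tx_{t+1}$ all lie in $G$. This gives a copy of $\widehat P_k$ in $G$, a contradiction.

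\emph{Step 4: the codegree bound.} Let $e=(x,i)(y,j)\in\widetilde H$. Each triangle of $\widetilde T$ containing $e$ has the form $e$ together with a common neighbour $(w,l)$. Its projection $xyw$ is a triangle of $G$, since $x,y,w$ are distinct by Step 2 and the edges exist by Step 1. Thus $w\in N_G(x)\cap N_G(y)$. For a fixed $w$, the vertex $(w,l)$ adjacent to $(x,i)$ is unique, so distinct triangles through $e$ give distinct $w$. Finally, $xy=f(e)\in H$, so $xy\notin R_0'$, and therefore $\codeg_G(xy)\le\omega_n$. Hence $e$ lies in at most $\omega_n$ triangles of $\widetilde T$ (indeed of $\widetilde G$).

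The only delicate point is Step 2, the local injectivity, which is what ensures that the projected vertices are distinct. It follows from the parity rule fixing $j$ uniquely, so I expect no real obstacle.
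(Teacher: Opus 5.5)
Your proof is correct and follows essentially the same route as the paper. You project via $\pi$, use the injectivity of $\pi$ on neighbourhoods (which you justify more carefully through the parity rule) to push a copy of $\widehat P_k$ down to $G$, and for the codegree bound you observe that each common neighbour $w$ of $x,y$ admits at most one lift $(w,l)$ completing the triangle, so the count is at most $\codeg_G(xy)\le\omega_n$; this in fact bounds all triangles of $\widetilde G$ through the edge, a harmless strengthening.
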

\begin{proof}
We define a mapping $\pi: V(\widetilde G) \to V(G)$ such that  $\pi((v,i))=v$ for each $(v,i) \in V(\widetilde G)$.  If
 $\widetilde G$ is not  $\widehat P_k$-free, then there is a vertex $(v,i)$ such that $N_{\widetilde G}((v,i))$ contains a $P_k$. Note that $\pi$ is injective on  $N_{\widetilde G}((v,i))$.
  If $(v,i)(u,j) \in E(\widetilde G)$, then $\pi((v,i))\pi((u,j)) \in E(G)$ by the definition of $E(\widetilde G)$. It follows that
$\pi(N_{\widetilde G}((v,i)))$ contains a $P_k$ in $G$. Apparently, each vertex from $\pi(N_{\widetilde G}((v,i)))$ is adjacent to $v$ in $G$. Thus $G$ contains a  $\widehat P_k$, a contradiction. The first part of the claim follows.

 For the second part of the claim, let $(v,0)(u,1) \in \widetilde H$ with $(v,0) \in A$ and $(u,1) \in B$.
Each triangle in $\widetilde T$ containing $(v,0)(u,1)$ has the form  $(v,0)(u,1)(w,i)$, where $w \in N_G(u)\cap N_G(v)$ and $i \in \{0,1\}$. For each $w \in N_G(u)\cap N_G(v)$, we next show that  at most one of  $(v,0)(u,1)(w,0)$ and $(v,0)(u,1)(w,1)$ is a  triangle in $\widetilde G$. Assume $(v,0)(u,1)(w,0)$ is a triangle in $\widetilde G$. Then $vw \in  R_0$, so $(v,0)$ is not adjacent to $(w,1)$ in $\widetilde G$ and  $(v,0)(u,1)(w,1)$ does not form a triangle. Similarly, we can show $(v,0)(u,1)(w,0)$ does not form a triangle provided $(v,0)(u,1)(w,1)$ is a  triangle. As $\codeg_G(u,v) \leq \omega_n$, the second part of the claim is proved.
\end{proof}

We now turn to estimating $\widetilde T$. All edges of $\widetilde R_0$ lie in the parts $A$ and $B$. Meanwhile, all edges of $\widetilde H$ are crossing $A$ and $B$.
For an edge $ab \in \widetilde H$ with $a \in A$ and $b \in B$, we say $ab$ is {\it bad} if either $|N_A(a) \cap N_A(b)| \geq r+1$ or $ |N_B(a) \cap N_B(b)| \geq r+1$.
The set of bad edges in $\widetilde H$ is denoted by $\widetilde{E}_{\textrm{bad}}$.
Let
\[
\widetilde T_1=\{T \in \widetilde{T}: E(T) \cap \widetilde{E}_{\textrm{bad}} \neq \emptyset\}, \qquad \widetilde T_2=\widetilde T \setminus \widetilde T_1.
\]
\begin{claim}\label{wt1}
$|\widetilde T_1|=o(n^2).$
\end{claim}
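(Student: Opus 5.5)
The plan is to combine the codegree bound from Claim~\ref{lem:cover-properties} with a count of bad edges that is charged to $\widetilde R_0$. Every triangle in $\widetilde T_1$ contains a bad edge of $\widetilde H$. By Claim~\ref{lem:cover-properties}, each edge of $\widetilde H$ lies in at most $\omega_n$ triangles of $\widetilde T$. Hence
\[
|\widetilde T_1|\le \omega_n|\widetilde E_{\textrm{bad}}|,
\]
and it suffices to show $|\widetilde E_{\textrm{bad}}|\le C'|\widetilde R_0|=O(\rho_n n^2)$ for a constant $C'=C'(k)$. Then \eqref{eq:omega-properties} gives $|\widetilde T_1|=O(\omega_n\rho_n n^2)=o(n^2)$.

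Split the bad edges $ab$ ($a\in A$, $b\in B$) into \emph{$A$-bad} ones, with $|N_A(a)\cap N_A(b)|\ge r+1$, and \emph{$B$-bad} ones, with $|N_B(a)\cap N_B(b)|\ge r+1$; by symmetry it is enough to treat $A$-bad edges. Fix $a\in A$ and set $X:=N_A(a)$. Every edge from $a$ into $A$ lies in $\widetilde R_0$, so $|X|=d_{\widetilde R_0}(a)$. Let $Y\subseteq N_B(a)$ be the set of $b$ with $ab$ $A$-bad, so each $y\in Y$ has at least $r+1$ neighbours in $X$. All of $X\cup Y$ lies in the link $N_{\widetilde G}(a)$, which is $P_k$-free because $\widetilde G$ is $\widehat P_k$-free (Claim~\ref{lem:cover-properties}).

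Now build a multi-hypergraph $\cH_a$ on $X$ whose edges are the sets $N_X(y)$ for $y\in Y$; thus $e(\cH_a)=|Y|$.
\begin{itemize}
\item $\cH_a$ has no Berge path of length $r+1$. Such a path $x_0,N_X(y_1),x_1,\dots,N_X(y_{r+1}),x_{r+1}$ lifts to the path $x_0y_1x_1\cdots y_{r+1}x_{r+1}$ in the link, which has $2r+3\ge k$ vertices. This is a $P_k$, a contradiction.
\item The multiplicity of $\cH_a$ is at most $r+1$. Suppose some set $S$ with $|S|\ge r+1$ equals $N_X(y)$ for $r+2$ distinct vertices $y_1,\dots,y_{r+2}$. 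Pick distinct $s_1,\dots,s_{r+1}\in S$. Then $y_1s_1y_2s_2\cdots s_{r+1}y_{r+2}$ is a path on $2r+3\ge k$ vertices in the link, again a contradiction.
\end{itemize}
Lemma~\ref{lem:hypergraph} with $\mu=r+1$ therefore gives
\[
|Y|\le \bigl(r+(r+1)2^{r+1}\bigr)d_{\widetilde R_0}(a).
\]
Summing over $a\in A$ bounds the number of $A$-bad edges by $2\bigl(r+(r+1)2^{r+1}\bigr)|\widetilde R_0|$. The same argument, run at the vertex $b\in B$ with $X=N_B(b)$, bounds the $B$-bad edges. Hence $|\widetilde E_{\textrm{bad}}|=O(|R_0|)=O(\rho_n n^2)$, and the claim follows from the first paragraph.

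The main obstacle is the middle step: the bad edges must be charged to $\widetilde R_0$ rather than to triangles. A naive triangle count only gives $|\widetilde E_{\textrm{bad}}|\lesssim|\widetilde T|/(r+1)$, which is of order $n^2$ and is not enough. Erd\H{o}s--Gallai on the link is also too weak, because its constant $\tfrac{k-2}{2}$ is about $r$. What makes the bound work is the degree threshold $r+1$ in the definition of bad edges, which lets the path arguments in both bullets produce paths on $2r+3$ vertices.
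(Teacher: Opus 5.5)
Your argument is correct. Its outer structure matches the paper's: you bound $|\widetilde T_1|\le\omega_n|\widetilde E_{\textrm{bad}}|$ via Claim~\ref{lem:cover-properties}, split bad edges into $A$-bad and $B$-bad, charge the $A$-bad edges at $a$ to $|N_A(a)|=d_{\widetilde R_0}(a)$, and sum over $a$.

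The genuine difference is the middle step. You build the multi-hypergraph $\cH_a$ of the sets $N_X(y)$, bound its multiplicity by $r+1$, rule out Berge paths of length $r+1$, and invoke Lemma~\ref{lem:hypergraph}. The paper instead applies Erd\H{o}s--Gallai directly to the bipartite graph between $N_A(a)$ and $N_B^+(a)$. Each vertex of $N_B^+(a)$ contributes at least $r+1$ edges, so
\[
(r+1)|N_B^+(a)|\le\frac{k-2}{2}\bigl(|N_B^+(a)|+|N_A(a)|\bigr).
\]
Since $\frac{k-2}{2}\le r+\frac12<r+1$, this rearranges to $|N_B^+(a)|\le(2r+1)|N_A(a)|$.

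So your closing remark that Erd\H{o}s--Gallai is too weak is mistaken. The threshold $r+1$ strictly exceeding the Erd\H{o}s--Gallai density $\frac{k-2}{2}$ is exactly what makes it suffice.

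The paper's route is shorter and gives the constant $2r+1$ in place of your $r+(r+1)2^{r+1}$. Yours reuses the Section~2 machinery at extra cost, and both of its path constructions (the lifted Berge path and the multiplicity bound) are valid. One point worth stating explicitly: when a set occurs several times in a Berge path of $\cH_a$, the multiplicity condition lets you assign distinct $y_i$ to those occurrences. That is what makes your lifted path $x_0y_1x_1\cdots y_{r+1}x_{r+1}$ simple.
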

\begin{proof}
 Fix $a \in A$ and  let $Q_a$ be the bipartite graph with parts  $N_A(a)$ and $N_B(a)$. Apparently,  $Q_a$ is $P_k$-free. Define
\[
  N_B^+(a):=\{b \in N_B(a): |N_A(a)\cap N_A(b)|\ge r+1\}.
\]
Applying Theorem \ref{lem:EG} to the $P_k$-free graph
$Q_a[N_B^+(a) \cup N_A(a)]$, we obtain
\begin{equation}\label{eq:Yb-plus-bound-start}
  (r+1)|N_B^+(a)|
  \le e_{Q_a}(N_B^+(a),N_A(a))
  \le\frac{k-2}{2}\bigl(|N_B^+(a)|+|N_A(a)|\bigr).
\end{equation}
If $k=2r+2$, then $(k-2)/2=r$, and
\eqref{eq:Yb-plus-bound-start} gives
\[
  (r+1)|N_B^+(a)|
  \le r\bigl(|N_B^+(a)|+|N_A(a)|\bigr),
\]
so $|N_B^+(a)|\le r|N_A(a)|$. If $k=2r+3$, then
$(k-2)/2=(2r+1)/2$, and after multiplying by $2$ we obtain
\[
  2(r+1)|N_B^+(a)|
  \le(2r+1)\bigl(|N_B^+(a)|+|N_A(a)|\bigr),
\]
so $|N_B^+(a)|\le(2r+1)|N_A(a)|$. Thus in both cases, we get
\begin{equation}\label{eq:Yb-plus-bound}
  |N_B^+(a)|\le(2r+1)|N_A(a)|.
\end{equation}
Summing \eqref{eq:Yb-plus-bound} over $a\in A$ gives
\[
  |\{ab\in {\widetilde E}_{\textrm{bad}}: |N_A(a) \cap N_A(b)|\ge r+1\}|
  =\sum_{a \in A}|N_B^+(a)|
  \le(2r+1)\sum_{a\in A}|N_A(a)|
  =2(2r+1)e(A).
\]
Repeating the argument above, we can show
\[
  |\{ab\in {\widetilde E}_{\textrm{bad}}: |N_B(a) \cap N_B(b)|\ge r+1\}| \leq 2(2r+1)e(B).
\]
Therefore,
\[
|{\widetilde E}_{\textrm{bad}}| \leq  2(2r+1)(e(A)+e(B))=2(2r+1)|\widetilde{R}_0|=O_k(|\widetilde{R}_0|).
\]
By Claim 6, each edge $ab\in {\widetilde E}_{\textrm{bad}}$ is contained in at most $\omega_n$ triangles of $\widetilde{T}_1$. This implies that
\begin{equation*}
  |\widetilde{T}_1| \leq \omega_n |{\widetilde E}_{\textrm{bad}}|
  =O_k(\omega_n|R_0|)
  =O_k(\omega_n\rho_n n^2)
  =o(n^2),
\end{equation*}
where the final step follows from \eqref{eq:omega-properties}. The claim is proved.
\end{proof}
The following claim will complete the proof of the upper bound   for Theorem \ref{thm:main}.
%Recall that the definition of edge set $\widetilde H$, we write $|H|$ for the number of elements in $H$.
\begin{claim}\label{wt2}
$|\widetilde T_2| \leq \tfrac{r}{2} |\widetilde H|+o(n^2).$
\end{claim}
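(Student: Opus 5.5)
The plan is to double count the triangles of $\widetilde T_2$ through vertex links, with suitable weights, and to bound each link by Theorem~\ref{lem:weighted}. The weights come from the shape of a triangle $T\in\widetilde T_2$. It has one edge $xy\in\widetilde R_0$, which lies inside $A$ or inside $B$. Its third vertex $z$, the apex, lies on the other side, and the two edges $xz,yz$ lie in $\widetilde H$ and are not bad. I will count $T$ with weight $2$ at its apex $z$ and with weight $1$ at each of $x$ and $y$, so each triangle receives total weight $4$.

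\emph{Setting up the link graphs.} Fix a vertex $v$, say $v\in A$; the case $v\in B$ is symmetric. Let $Y_v$ be the set of $b\in N_B(v)$ such that $vb\in\widetilde H$ is not bad. Let $X_v:=N_A(v)$, which is the set of $\widetilde R_0$-neighbours of $v$. Define $G_v$ on $X_v\cup Y_v$ by keeping only those edges $xy$ for which $vxy\in\widetilde T_2$.
\begin{itemize}
\item A triangle of $\widetilde T_2$ with apex $v$ contributes an edge inside $Y_v$, since its base is an $\widetilde R_0$-edge in $B$ and both of its sides are good.
\item A triangle in which $v$ is an endpoint of the $\widetilde R_0$-edge contributes an edge between $X_v$ and $Y_v$. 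Here the other endpoint lies in $X_v$, and the apex lies in $Y_v$ because its $\widetilde H$-edge to $v$ is good.
\item An edge inside $X_v$ would give a triangle with two $\widetilde R_0$-edges, which is not in $\widetilde T$. So no such edges are kept, and $X_v$ is independent in $G_v$.
\end{itemize}
With these weights,
\[
4|\widetilde T_2| = \sum_{v} \bigl(e_{G_v}(X_v,Y_v)+2e_{G_v}(Y_v)\bigr).
\]

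\emph{Verifying the hypotheses of Theorem~\ref{lem:weighted}.} First, $G_v$ is a subgraph of $\widetilde G[N_{\widetilde G}(v)]$. By Claim~\ref{lem:cover-properties} the graph $\widetilde G$ is $\widehat P_k$-free, so $G_v$ is $P_k$-free. Next take $y\in Y_v$, so that $vy$ is a good edge with $v\in A$ and $y\in B$. Then
\[
d_{X_v}(y)\le |N_A(v)\cap N_A(y)|\le r \quad\text{and}\quad d_{Y_v}(y)\le |N_B(v)\cap N_B(y)|\le r.
\]
These inequalities are exactly where the definition of bad edges enters.

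\emph{Applying the theorem and summing.} Theorem~\ref{lem:weighted} gives
\[
e_{G_v}(X_v,Y_v)+2e_{G_v}(Y_v)\le r|Y_v|+C_k|X_v|.
\]
Summing over all $v$, each edge of $\widetilde H$ is counted in at most two sets $Y_v$, one for each endpoint, so $\sum_v|Y_v|\le 2|\widetilde H|$. Similarly $\sum_v|X_v|=2|\widetilde R_0|=o(n^2)$. Hence
\[
4|\widetilde T_2|\le 2r|\widetilde H|+o(n^2),
\]
which is the claimed bound $|\widetilde T_2|\le \tfrac r2|\widetilde H|+o(n^2)$.

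The main point needing care is the weight bookkeeping. One must check that each triangle of $\widetilde T_2$ appears exactly once as a $Y_v$--$Y_v$ edge, at its apex, and exactly once as an $X_v$--$Y_v$ edge at each endpoint of its $\widetilde R_0$-edge, and nowhere else. This uses that the bipartite structure of $\widetilde G$ forces the apex to lie on the opposite side from the $\widetilde R_0$-edge. It also uses that the goodness of both $\widetilde H$-edges of the triangle places the relevant vertices in the sets $Y_v$.
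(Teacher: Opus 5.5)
Your proof is correct and is essentially the paper's argument. The paper also applies Theorem~\ref{lem:weighted} in each link with $X=N_A(a)$ and $Y$ the set of good $\widetilde H$-neighbours, and its $Q_a$ coincides with your $G_v$. The only difference is that it sums over $a\in A$ alone, where each triangle of $\widetilde T_2$ already receives weight $2$ (once at each base endpoint if the base lies in $A$, twice at the apex if it lies in $B$), giving $2|\widetilde T_2|\le r|\widetilde E_{\textrm{good}}|+C_k|\widetilde R_0|$ directly; your symmetric sum over all vertices simply doubles both sides.
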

\begin{proof}
Let $\widetilde{E}_{\textrm{good}}=\widetilde H \setminus \widetilde{E}_{\textrm{bad}}$. Now each triangle from $\widetilde T_2$ contains precisely two edges from $\widetilde{E}_{\textrm{good}}$ and one edge from $\widetilde R_0$. We write
 $\widetilde T_2= \widetilde T_2' \cup  \widetilde T_2''$, where
\[
\widetilde T_2'=\{aba' \in \widetilde T_2: ab, a'b \in \widetilde{E}_{\textrm{good}}, aa' \in \widetilde R_0, \text{ and }a,a'\in A\}
\]
and
\[
\widetilde T_2''=\{abb' \in \widetilde T_2: ab, ab' \in \widetilde{E}_{\textrm{good}}, bb' \in \widetilde R_0,\text{ and }b,b'\in B\}.
\]
 For a vertex $a \in A$, let $N_B'(a)=\{b \in B: ab \in \widetilde{E}_{\textrm{good}}\}$ and $Q_a$ be the  graph with parts $N_A(a)$ and $N_B'(a)$. The edge set of $Q_a$ consists of edges inside $N_B'(a)$ as well as those good edges between $N_A(a)$ and $N_B'(a)$. Each vertex $b \in N_B'(a)$ has at most $r$ neighbors in $N_A(a)$ as $ab$ is good. Similarly, $b$ is adjacent to at most $r$ vertices in $N_B'(a)$. It is clear that $Q_a$ is $P_k$-free.  Applying Theorem \ref{lem:weighted} with $X=N_A(a)$ and $Y=N_B'(a)$, we conclude that
\begin{equation*}
  e_{Q_a}(N_A(a),N_B'(a))+2e_{Q_a}(N_B'(a))
  \le r|N_B'(a)|+C_k|N_A(a)|.
\end{equation*}
Observe that
$\sum_{a \in A}e_{Q_a}(N_A(a),N_B'(a))=2|\widetilde T_2'|$ and $\sum_{a \in A} e_{Q_a}(N_B'(a))=|\widetilde T_2''|$.
Therefore,
\[
\sum_{a \in A}(e_{Q_a}(N_A(a),N_B'(a))+2e_{Q_a}(N_B'(a)))=2|\widetilde T_2|.
\]
We also have
\begin{align*}
\sum_{a \in A}(e_{Q_a}(N_A(a),N_B'(a))+2e_{Q_a}(N_B'(a))) &\leq \sum_{a \in A} (r|N_B'(a)|+C_k|N_A(a)|) \\
                                                         & \leq r |\widetilde{E}_{\textrm{good}}|+C_k|\widetilde R_0| \\
                                                         & \leq r |\widetilde{H}|+o(n^2).
\end{align*}
The claim follows from the inequalities above.
\end{proof}

\begin{proof}[Proof of Theorem \ref{thm:main}] By Claims \ref{t1g} and \ref{2t2},
\[
t(G)=|T_1(G)|+|T_2(G)|=|T_2(G)|+o(n^2)=\frac{1}{2}|\widetilde T|+o(n^2).
\]
Further, Claims \ref{wt1} and \ref{wt2} imply
\[
t(G)=\frac{1}{2}|\widetilde T_1|+\frac{1}{2}|\widetilde T_2|+o(n^2)=\frac{1}{2}|\widetilde T_2|+o(n^2) \leq \frac{r}{4}|\widetilde H|+o(n^2)=\frac{r}{2}|H|+o(n^2).
\]
Since the subgraph spanned by the edge set $H$ is triangle-free,  Mantel's theorem gives
$|H| \le n^2/4$. Therefore, we obtain
\begin{equation*}\label{eq:upper-final}
  t(G)
  \le\frac r2 |H|+o(n^2)
  \le\frac r2\cdot\frac{n^2}{4}+o(n^2)
  =\frac{rn^2}{8}+o(n^2),
\end{equation*}
as desired. The proof of Theorem \ref{thm:main} is complete.
\end{proof}

\noindent
{\bf Acknowledgement:} Generative AI tools were used to explore proof strategies for Theorem \ref{lem:weighted} and
all other arguments and proofs in this paper were developed by the authors.  Generative AI tools were also used for  proofreading. All AI-assisted outputs were
independently checked and verified by the authors, who take full responsibility for the content of this work.

\end{document}